\theoremstyle{plain}
\newtheorem{theorem}{Theorem}[section]
\theoremstyle{definition}
\newtheorem{definition}[theorem]{Definition}
\theoremstyle{remark}
\newtheorem{remark}{Remark}
\begin{document}


\title{ On differential inclusions arising from some discontinuous systems }

\author{
\name{A.~V. Fominyh \textsuperscript{a,}\textsuperscript{b}\thanks{The main results of this paper (sections 5--7) were obtained in IPME RAS and supported by Russian Science Foundation (project no. 20-71-10032).}}
\affil{\textsuperscript{a}Institute for Problems in Mechanical Engineering, Russian Academy of Sciences, Russia
\textsuperscript{b}Faculty of Applied Mathematics and Control Processes, {Saint Petersburg State University}, {Saint Petersburg}, {Russia}}
}

\maketitle

\begin{abstract}
The paper deals with systems of ordinary differential equations containing in the right-hand side controls which are discontinuous in phase variables. These controls cause the occurrence of sliding modes. If one uses one of the well-known definitions of the solution of discontinuous systems, then the motion of an object while being on some surface can be described in terms of differential inclusions. With the help of the previously developed apparatus for solving differential inclusions, a method is constructed for finding the trajectories of a system moving in a such a mode. Since some of frequently used discontinuous controls contain nonsmooth functions of phase variables, the paper pays special attention to study the differential properties of such systems. At the end of the paper controls of a slightly different, in contrast to the classical, type are considered which have useful differential properties, and a method is constructed for solving systems with such controls considered both before hitting the required surface and moving in its vicinity.
\end{abstract}

\begin{keywords}
Sliding mode; discontinuous system; differential inclusion; G${\rm\hat{a}}$teaux gradient; support function
\end{keywords}

\section{Introduction}

In many practical problems, when trying to construct mathematical models of real physical processes, discontinuous control is used; therefore, the right-hand sides of the systems of differential equations describing the process under study are discontinuous functions of the state vector. For example, the control may be an $m$-dimensional vector function of the following form 
\begin{equation}
\label{0}
u_i(x,t) = 
\left\{
\begin{array}{ll}
u_i^+(x,t), \ s_i(x) > 0, \\
u_i^-(x,t), \ s_i(x) < 0,
\end{array}
\right.
\end{equation}
where $t$ belongs to the time interval on which the motion of the system is considered, the functions $u_i^+(x,t)$, $u_i^-(x,t)$, $i = \overline{1,m}$, are continuous, and $s_i(x) = 0$ are discontinuity surfaces  ($s_i(x)$, $i = \overline{1,m}$, are some continuous functions of the phase vector $x$). Put $s(x) = (s_1(x), \dots s_m(x))'$.
Those cases, in which the discontinuity points are isolated, belong to classical control theory and have been studied more fully than the cases where the set of discontinuity points constitutes a set of nonzero measure in time. 

In such systems the state vector may stay on one of the discontinuity surfaces (or on some their intersection) over a period of time of nonzero measure. Motion along the discontinuity surfaces or along some their intersection is called a sliding mode.  Since the system trajectories belonging to the set of discontinuity points do not coincide with any of the system trajectories resulting from various combinations of continuous controls $u_i^+(x,t)$, $u_i^-(x,t)$, $i = \overline{1,m}$, they are singular. Therefore it is required to introduce a corresponding definition in case when the sliding mode occurs. Let us discuss one of the classical variants of such a definition (see \cite{Filippov}, \cite{AizerPyatn}) which will be used in this paper. 
 On the finite time interval $[0, T]$ consider the system 
 $\dot x = f(x, u_1(x, t), \dots, u_m(x, t), t)$, 
 in which the vector-function $f(x, u_1, \dots, u_m, t)$ is continuous in all its arguments, and the vector-functions $u_i (x, t)$, $i = \overline{1,m}$, 
 are discontinuous on the sets $s_i(x) = 0$, $i = \overline{1,m}$, respectively. 
 At every point $(x, t)$ of discontinuity of the vector-function $u_i (x, t)$, $i = \overline{1,m}$, 
 a closed set $U_i (x, t)$, $i = \overline{1,m}$, must be defined. It is a set of possible values of the variable  
 $u_i$ of the function $f(x, u_1, \dots, u_m, t)$. 
 Denote $F (x, t) = f(x, u_1, ..., u_m, t)$  
the set of the function $f(x, u_1, ..., u_m, t)$ values at the fixed variables 
 $x, t$, and while $u_1, \dots, u_m$ run through the sets 
 $U_1(x, t), \dots, U_m(x, t)$ respectively. Then the solutions of this differential inclusion are taken as solutions of the original differential equation with a discontinuous right-hand side. 
In physical systems the sets $U_i (x, t)$, $i = \overline{1,m}$, usually correspond to different blocks and are assumed convex. At every point $(x, t)$ of discontinuity of the vector-function $u_i (x, t)$, $i = \overline{1,m}$, the set $U_i (x, t)$ must also contain all the limit points of all sequences $v_k \in U_i (x_k, t_k)$, where $x_k \rightarrow x$ and $t_k \rightarrow t$ if $k \rightarrow \infty$. If the control is of form~(\ref{0}), then it is natural to consider $U_i(x, t) = \mathrm{co} \{u_i^-(x,t), u_i^+(x,t)\}$, $i = \overline{1,m}$, as such sets. Note that there exist some other common definitions of discontinuous systems solutions (see, e. g., \cite{Filippov}, \cite {Utkin}). 
 
 It was noticed \cite{Filippov}, \cite{Utkin} that when the system state is in a sliding mode, new useful properties are observed that are absent when the system moves in a ``normal'' mode. For example, it is precisely such motions that are optimal in the sense of some criterion in the optimal control theory. The artificial introduction of a sliding mode into the system is also used in order to stabilize it, as well as to get rid of unwanted disturbances acting on the object under consideration. The sliding modes have numerous applications. For example, in work \cite{AshrafiuonMuske} a sliding mode is used for trajectory tracking of autonomous surface vessels. In paper \cite{Beltran} a sliding mode was used in order to stabilize a wind turbine in spite of model uncertainties. Herewith, in the most of the papers listed both first-order and second-order sliding surfaces are implemented. It was noted that higher-order sliding modes preserve or generalize the main properties of the standard ones and mitigate dangerous chattering effect, so they are also widely used in the literature (see, e. g., \cite{Levant}, \cite{EmelKorLev}). In some papers (\cite{Borta}, \cite{Furuta}) sliding modes were applied to discrete systems; herewith the control designs were developed mitigating undesirable chattering and high-frequency switching between different values of the control signal. Note also paper \cite{XuOzguner} where a nontrivial and an effective control design is constructed for a class of underactuated systems. Some of the papers aim at constructing a continuous control (see section 7), for example, \cite{Tang} (in application to rigid robots), and also \cite{Shtessel} (for the missile-interceptor guidance system against targets performing evasive maneuvers) which reduce control efforts in the transient state.  
 
 Therefore, it makes sense to pose the problem of choosing a control that after a finite time interval transfers the system trajectory from an arbitrary initial state to a small neighborhood of the discontinuity surface in which the system continues to move for the rest of time. This is the essence of providing the stability of the sliding mode of the system. Moreover, if one additionally requires the fulfillment of the condition $s(x(t)) \rightarrow {\bf 0_m}$ at $t \rightarrow \infty$ (where $s(x)~=~{\bf 0_m}$ is a discontinuity surface), then it is the stability ``in big''. The given definition is of an qualitative nature; a strict definition of the sliding mode stability may be found in \cite{Utkin}. A very powerful tool for studying the stability of sliding modes is the theory of Lyapunov and Barbashin--Krasovskii which originates from classical stability theory. In this paper a feedback control will be considered for the class of systems that endows the Lyapunov function with the required properties, thereby ensuring the stability of sliding modes.
 
As one will see from the strict statement of the problem, the system motion along the discontinuity surface can be described by some interval system. As is shown in work \cite{Fominyh1}, an interval system may be written down in the form of a differential inclusion of a certain structure. So, if we use the above definition of the discontinuous system solution, then after hitting the discontinuity surface (we recall that the control used sets itself this goal, as well as the goal of keeping the trajectory on this surface), the system trajectory motion is described by a differential inclusion. 
 
 This paper aims at the problem of finding a solution of a differential inclusion describing the system motion in a sliding mode. Herewith, we will additionally impose some restrictions on the desired trajectory, for example, hitting a certain point at the final moment of time (when moving along the discontinuity surface). For solving this problem we will use the previously developed apparatus for solving interval systems considered as differential inclusions \cite{Fominyh1}, \cite{Fominyh2}, \cite{Fominyh3}. As will be seen below, this inclusion in some cases has the following feature: due to the special control structure the right-hand sides of the differential inclusion can be nonsmooth functions of the phase coordinates. Therefore, the paper also discusses the conditions for the appearance of nonsmoothness in the right-hand side of a differential inclusion and investigates the differential properties of such systems. Finally, in the final part of the paper we discuss a somewhat different, in contrast to the classical, feedback control structure which allows one to preserve some useful differential properties of the right-hand side of the system, while at the same time (just like the classical control type) ensuring that the system gets into a small enough neighborhood of the required surface; although in the general case it does not ensure the stability ``in big'' of the sliding mode. The control structure proposed allows one to develop a method for searching for the system trajectory moving both in the vicinity of this surface and before hitting it. Sometimes, in order to fulfill some restrictions on the trajectory, it is necessary to choose an appropriate structure for the discontinuous surface itself; such problems are called the sliding mode design. From physical considerations approximate structure and parameters are often known, so it makes sense to ``correct'' these parameters in order to bring a system trajectories the desired properties (see Remark~2 below); this problem is also being solved in this paper.

\section{Basic definitions and notations}
In the paper we will use the following notations. $C_{n} [0, T]$ is the space of $n$-dimensional continuous on $[0, T]$ vector-functions. $P_{n} [0, T]$ is the space of piecewise continuous and bounded on $[0, T]$ $n$-dimensional vector-functions. In the paper we will also require the space $L^2_n [0, T]$ of square-summable in $[0, T]$ $n$-dimensional vector-functions. Let $X$ be a normed space, then $||\cdot||_X$ denotes the norm in this space and $X^*$ denotes the space conjugate to the space $X$.

In the paper we assume that each trajectory $x(t)$ is a piecewise continuously differentiable vector-function with bounded derivative in its domain. Let $t_0~\in~[0, T)$ be a point of nondifferentiability of the vector-function $x(t)$, then for definiteness we assume that $\dot x(t_0)$ is a right-hand derivative of the vector-function $x(t)$ at the point~$t_0$. Similarly, we assume that $\dot x(T)$ is a left-hand derivative of the vector-function $x(t)$ at the point~$T$. With the assumptions and the notations made we can suppose that the vector function $x(t)$ belongs to the space $C_{n} [0, T]$ and that the vector function $\dot x(t)$ belongs to the space~$P_{n} [0, T]$.

For the arbitrary set $F \subset R^n$ let us define a support function of the vector $\psi~\in~R^n$ as $c(F, \psi) = \sup \limits_{f\in F}\langle f, \psi \rangle$ where $\langle a, b \rangle$ is a scalar product of the vectors $a, b \in R^n$. Let also $S_n$ be a unit sphere in $R^n$ with the center in the origin, let $B_r(c)$ be a ball with the radius $r \in R$ and the center $c \in R^n$, and the vectors $\bf{e_i}$, $i =\overline{1,n}$, form the standard basis in $R^n$. $0_n$ denotes a zero element of a functional space of some $n$-dimensional vector-functions, and $\bf{0_n}$ --- a zero element of the space $R^n$. Let $E_m$ denote an identity matrix and ${\bf {O}_{m}}$ --- a zero matrix in the space $R^m \times R^m$, let also $\mathrm{diag[P, Q]}$ denote a diagonal matrix with the blocks $P$ and $Q$ (where $P$ and $Q$ are the matrices of some dimensions). Denote $|x| = \sum_{i=1}^n |x_i|$, where $x \in R^n$.

In the paper we will use both superdifferentials of functions in a finite-dimensional space and superdifferentials of functionals in a functional space. Despite the fact that the second concept generalizes the first one, for convenience we separately introduce definitions for both of these cases and for those specific functions (functionals) and their variables and spaces which are considered in the paper. 
\begin{definition}
Consider the space $R^n \times R^n$ with the standard norm. Let \linebreak $d = [d_1, d_2] \in R^n \times R^n$ be an arbitrary vector. Suppose that at the point $(x, z)$ there exists such a convex compact set $\overline \partial h(x,z)$ $\subset R^n \times R^n$ that 
\begin{equation}
\label{0.1}
\frac{\partial h(x,z)}{\partial d} = \lim_{\alpha \downarrow 0} \frac{1}{\alpha} \big(h(x+\alpha d_1, z + \alpha d_2) - h(x,z)\big) = \min_{w \in \overline \partial h(x,z)} \langle w, d \rangle. 
\end{equation}

In this case the function $h(x,z)$ is called superdifferentiable at the point $(x, z)$, and the set $\overline \partial h(x,z)$ is called the superdifferential of the function $h(x,z)$ at the point $(x,z)$.

From expression (\ref{0.1}) one can see that the following formula
$$ h(x + \alpha d_1, z + \alpha d_2) = h(x, z) + \alpha  \frac{\partial h(x,z)}{\partial d} + o(\alpha, x, z, d), $$
$$\quad  \frac{o(\alpha, x, z, d)}{\alpha} \rightarrow 0, \ \alpha \downarrow 0,$$
holds true. 
\end{definition}

If for each number $\varepsilon > 0$ there exist such numbers $\delta > 0$ and $\alpha_0 > 0$ that at $\overline d \in B_{\delta}(d)$ and at $\alpha \in (0, \alpha_0)$ one has $| o(\alpha, x, z, \overline d) | < \alpha \varepsilon $, then the function $h(x,z)$  is called uniformly superdifferentiable at the point $(x, z)$. Note \cite{demvas} that if the function $h(x,z)$ is superdifferentiable at the point $(x, z)$ and is locally Lipschitz continuous in the vicinity of the point $(x, z)$, then it is uniformly superdifferentiable at the point $(x, z)$. 

If the function $\varsigma(\xi)$ is differentiable at the point $\xi_0 \in R^\ell$, then its superdifferential at this point is represented in the form \begin{equation}
\label{0.4}\overline \partial \varsigma(\xi_0) = \{ \varsigma'(\xi_0) \}, 
\end{equation}
 where $\varsigma'(\xi_0)$ is a gradient of the function $\varsigma(\xi)$ at the point $\xi_0$.
Note also that the superdifferential of the finite sum of superdifferentiable functions is the sum of the superdifferentials of summands, i. e. if the functions $\varsigma_k(\xi)$, $k = \overline{1, r}$, are superdifferentiable at the point $\xi_0 \in R^\ell$, then the function $\varsigma(\xi) = \sum_{k=1}^r \varsigma_k(\xi)$ superdifferential at this point is calculated by the formula 
\begin{equation}
\label{0.5} \overline \partial \varsigma(\xi_0) = \sum_{k=1}^r \overline\partial \varsigma_k(\xi_0).
\end{equation}

\begin{definition}
Consider the space $C_n[0, T]$ with the $L_2^n [0, T] $ norm. Let $g \in C_n[0, T]$ be an arbitrary vector-function. Suppose that at the point $x$ there exists a convex weakly* compact set $\overline \partial {I(x)} \subset \big( C_n[0, T], || \cdot ||_{L_2^n [0, T]} \big) ^*$ such that  
\begin{equation}
\label{3'} 
\frac{\partial I(x)}{\partial g} = \lim_{\alpha \downarrow 0} \frac{1}{\alpha} \big(I(x+\alpha g) - I(x)\big) = \min_{w \in \overline \partial I(x)} w(g). \end{equation}

In this case the functional $I(x)$ is called superdifferentiable at the point $x$, and the set $\overline \partial {I(x)}$ is called the superdifferential of the functional $I(x)$ at the point $x$.

From expression (\ref{3'}) one can see that the following formula
$$I(x + \alpha g) = I(x) + \alpha  \frac{\partial I(x)}{\partial g} + o(\alpha, x, g),$$ $$ \quad  \frac{o(\alpha, x, g)}{\alpha} \rightarrow 0, \ \alpha \downarrow 0,$$
holds true.
\end{definition}

\begin{remark}
Note the following fact. Since, as is known, the space \linebreak $\big( C_n[0, T], || \cdot ||_{L_2^n [0, T]} \big)$ is everywhere dense in the space $L_2^n [0, T]$, then the space $\big( C_n[0, T], || \cdot ||_{L_2^n [0, T]} \big)^*$ is isometrically isomorphic to the space $L_2^n [0, T] $ (see \cite{KolmFom}); therefore, henceforth, we will identify these spaces \bigg($\big( C_n[0, T], || \cdot ||_{L_2^n [0, T]} \big)^*$ and $L_2^n [0, T]$\bigg).
\end{remark}

\section{Statement of the problem}

Consider the system of differential equations
\begin{equation}
\label{1}
\dot{x} = A x + B u
\end{equation}
with the initial point
\begin{equation}
\label{2}
x(0) = x_{0}
\end{equation}
and with the desired endpoint
\begin{equation}
\label{3}
x_j(T) = {x_T}_j.
\end{equation}
In formula (\ref{1}) $A$ is a constant $n \times n$ matrix, $B$ is a constant $n \times m$ matrix. For simplicity we suppose that $B = \mathrm{diag} [E_m, {\bf{O}_{n-m}}]$. The system is considered on the given finite time interval $[-t^*, T]$ (here $T$ is a given final time moment; see comments on the time moment $t^*$ below). Then by section 2 assumption $x(t)$ is an $n$-dimensional continuous vector-function of phase coordinates with a piecewise-continuous and bounded on $[-t^*, T]$ derivative; the structure of the $m$-dimensional control $u$ will be specified below. In formula (\ref{2}) $x_0 \in R^n$ is a given vector; in formula (\ref{3}) ${x_T}_j$ are given numbers, corresponding to those coordinates of the state vector, which are fixed at the right endpoint, here $j \in J \subset \{1..n\}$, where $J$ is a given index set.  

Let also the discontinuity surface 
\begin{equation}
\label{4}
s(x, c) = {\bf 0_m}
\end{equation}
be given, where $s(x, c)$ is a continuously differentiable vector-function. We will assume that the general structure of the surface is given while the parameters $c \in R^\ell$ are unknown. From sliding modes control theory point of view it is natural to restrict ourselves to considering the hyperplanes $$s_j(x, c) = \displaystyle \sum_{i=1}^n c_{i,j} x_i + c_{n+1, j}, \quad j = \overline{1, m},$$ where some of the components of $\{c_{i,j}\}$, $i = \overline{1, n+1}$, $j = \overline{1, m}$, are to be determined. We will sometimes omit the dependence of the surface vector-function on these variables for convenience of notation. 

\begin{remark}
In practice, the general structure of the discontinuity surface is usually known based on some physical considerations; one also can  approximately know the parameters of this surface. So it makes sense to ``correct'' the parameters of this surface in such a way that the object is endowed with the desired properties (for example, the desired condition on the right endpoint).   
\end{remark}

 Let us immediately write out the explicit form of the control which is mainly used in this paper. Let
\begin{equation}
\label{5}
u_i = -\alpha_i |x| \mathrm{sign}(s_i(x)), 
\end{equation} 
where $i = \overline{1, m}$, $\alpha_i \in [\underline {a}_i, \overline {a}_i]$, $i = \overline{1, m}$, are some positive numbers which are sometimes called gain factors.

In book \cite{Utkin} it is shown that if surface (\ref{4}) is a hyperplane, then under natural assumptions and with sufficiently large values of the factors $\alpha_i$, $i = \overline{1,m}$, controls (\ref{5}) ensure system (\ref{1}) hitting a small vicinity of discontinuity surface (\ref{4}) from arbitrary initial state (\ref{2}) in the finite time $t^*$ and further staying in this neighborhood with the fulfillment of the condition   $s_i(x(t)) \rightarrow 0$, $i = \overline{1,m}$, at $t \rightarrow \infty$, i. e. controls (\ref{5}) ensure the stability ``in big'' of the system (\ref{1}) sliding mode. In \cite{Utkin} one may also find the estimates on the time moment $t^*$. Here we assume that all the conditions required are already met, i. e. the numbers $\underline a_i$, $\overline a_i$, $i = \overline{1, m}$, are taken sufficiently large. In sections~4-6 we will be interested in the behavior of the system on the discontinuity surface (on the time interval $[0,T]$). Section~7 at the end of the paper also discusses the search for the trajectory of the system both on the discontinuity surface and before hitting it; herewith, a different (from that given in the formula (\ref{5})) control structure will be used.
 


On the surfaces $s_i(x) = 0$, $i = \overline{1,m}$, the solution of the original discontinuous system (accordingly to the definition given in Introduction) is a solution of the following differential inclusion:
\begin{equation}
\label{8}
\dot x_i \in A_i x + [\underline a_i, \overline a_i] |x| [-1, 1] = A_i x + [-\overline a_i, \overline a_i] |x|, \quad i = \overline {1,m}.
\end{equation}
\begin{equation}
\label{9}
\dot x_i = A_i x, \quad i = \overline{m+1, n}.
\end{equation}

In formulas (\ref{8}), (\ref{9}), $A_i$ is the $i$-th row of the matrix $A$, $i = \overline{1,n}$. Note that the more detailed version of the definition of a discontinuous system solution used in this paper is given in \cite{AizerPyatn}. It has a strict and rather complicated form so we don't consider the details here. For our purposes it is sufficient to postulate that the system (\ref{1}) solutions with controls (\ref{5}) employed are the solutions of system (\ref{8}), (\ref{9}) by definition.

 \begin{remark}
 It is clear that in practice we believe that the trajectory is on the discontinuity surface if it lies in a given sufficiently small neighborhood of this surface (which can be determined in each specific case based on the requirements for the accuracy of calculations and modeling of the process described by the system). 
\end{remark}

Let $F(x) = (f_1(x), \dots, f_n(x))'$, where $f_1(x), \dots, f_n(x)$ run through the corresponding sets $F_1(x), \dots F_n(x)$ from the right-hand sides of inclusions (\ref{8}); herewith $f_i(x) = F_i(x) := A_i x $, $i = \overline{m+1,n}$ (see (\ref{9})), so we can rewrite the given inclusions in the form 
\begin{equation}
\label{2.10}
\dot{x} \in F(x).
\end{equation}

 We formulate the problem as follows: it is required to find such a trajectory $x^{*} \in C_{n}[0, T]$ (with the derivative $\dot x^{*}~\in~P_{n}[0, T]$) which moves along discontinuity surface (\ref{4}) (the parameters $c^* \in R^\ell$ are to be determined as well) while $t~\in [0,T]$, satisfies differential inclusion (\ref{2.10}) and boundary conditions (\ref{2}), (\ref{3}). It is apparent that due to the continuity requirement on the desired trajectory, to the presence of restrictions on the right endpoint and to the complicated discontinuous surface structure which is unknown in advance, the classical theorems on the existence of a solution are not applicable here. The existence problem of the system considered is a complicated one and is beyond the scope of this paper. One can only say that as it has been noted in Introduction if from engineering practice it is known that a solution exists for some ``initial'' values of the desired surface parameters $c$ and for some endpoint, then it is sensible to state a problem of ``correcting'' these parameters in such a way that the desired properties of the trajectory are achieved. So it is natural to assume that there exists a problem solution if such a ``correction'' is not too significant. These considerations justify (at a qualitative level) such a problem statement and the assumption that there exists a corresponding solution.


\begin{remark}
Instead of trajectories from the space $C_n [0, T]$ with derivatives from the space $P_n [0, T]$ one may consider absolutely continuous on the interval $[0, T]$ trajectories with measurable and almost everywhere bounded on $[0, T]$ derivatives, what is more natural for differential inclusions. The choice of the solution space in the paper is explained by the possibility of its practical construction.
\end{remark} 

\section{Reduction to a variational problem}
We will sometimes write $F$ instead of $F(x)$ for brevity. 
Insofar as  $\forall x \in R^n$ the set $F(x)$ 
is a convex compact set in $R^n$, then inclusion (\ref{2.10}) 
may be rewritten as follows {\cite{Blagodatskih}}:
$$
\dot x_i(t) \psi_i \leqslant c(F_i(x(t)), \psi_i) \quad \forall \psi_i \in S_1, \quad \forall t \in [0, T], \quad i = \overline{1,n}.
$$

Calculate the support function of the set $F_i$.  For this note that the set $F_i$ is a one-dimensional ``ball'' with the the center 
$$ c_i(x) = A_i x , \quad i = \overline{1,n},$$
and with the ``radius'' 
$$ r_i(x) = \overline a_i |x|, \quad i = \overline {1,m},$$
$$ r_i(x) = 0, \quad i = \overline{m+1, n}.$$

So the support function of the set $F_i$ can be expressed \cite{Blagodatskih} by the formula
$$ c(F_i(x),\psi_i) = \psi_i A_i x + \overline a_i |x| |\psi_i|, \quad i = \overline {1,m},$$
$$ c(F_i(x),\psi_i) = \psi_i A_i x, \quad i = \overline{m+1, n}.$$

%
 
We see that the support function of the set $F_i$ is continuously differentiable in the phase coordinates $x$ if $x_i \neq 0$, $i = \overline{1, n}$. 

Denote $z(t) = \dot x(t)$, $z \in P_n [0, T]$, then from (\ref{2}) one has
\begin{equation}
\label{3.11''}
x(t) = x_0 + \int_0^t z(\tau) d\tau.
\end{equation}

Put
\begin{equation}
\label{3.12}
\ell_i(\psi_i, x, z) = \langle z_i, \psi_i \rangle - c ( F_i(x), \psi_i ),
\end{equation}
$$
h_i(x, z) = \max_{\psi_i \in S_1} \max \{ 0, \ell_i(\psi_i, x, z) \},
$$
$$h(x,z) = (h_1(x,z), \dots, h_n(x,z))'$$

and construct the functional
\begin{equation}
\label{3.13}
\varphi(z) = \frac{1}{2} \int_0^T h^2 \Big( x_0 + \int_0^t z(\tau) d\tau, z(t) \Big) dt.
\end{equation}

Consider the set
 $$
 \Omega = \{z \in P_{n} [0, T] \ | \ \varphi(z) = 0 \}.
 $$
It is not difficult to check that for functional (\ref{3.13}) the relation
\begin{equation} \label{3.133}
\left\{
\begin{array}{ll}
\varphi(z) = 0 \ (z \in \Omega), \ &\text{if} \ \dot x_i(t) \psi_i  \leqslant c(F_i(x(t)), \psi_i) \quad \forall \psi_i \in S_1, \quad \forall t \in [0, T], \quad i = \overline{1,n}. \\
\varphi(z) > 0 \ (z \notin \Omega), \ &\text{otherwise},
\end{array}
\right.
\end{equation}
holds true, i. e. inclusion (\ref{2.10}) takes place iff $\varphi(z) = 0$.

Introduce the functional
\begin{equation}
\label{3.14}
\chi(z) = \frac{1}{2} \sum_{j \in J} \left( {x_0}_j + \int_0^T z_j(t) dt - {x_T}_j \right)^2.
\end{equation}

It is seen that condition (\ref{2}) on the left endpoint is automatically satisfied due to the vector-function $z(t)$ definition and condition (\ref{3}) on the right endpoint is satisfied iff $\chi(z) = 0$.

As it was noted above, the search for a solution of a differential inclusion is carried out on the discontinuity surfaces $s_i(x) = 0$, $i = \overline {1, m}$, so also introduce the functional 
\begin{equation}
\label{3.14'}
\omega(z, c) = \frac{1}{2} \int_0^T s^2 \Big( x_0 + \int_0^t z(\tau) d\tau, c \Big ) dt.
\end{equation}

Construct the functional
\begin{equation} 
\label{3.15} 
I(z, c) = \varphi(z) + \chi(z) + \omega(z, c).
\end{equation}

So the original problem has been reduced to minimizing functional (\ref{3.15}) on the space $P_n [0, T] \times R^{\ell}$. Denote $z^*, c^*$ a global minimizer of this functional. Then $$ x^*(t) = x_0 + \int_0^t z^*(\tau) d\tau $$
is a solution of the initial problem (and the vector $c^*$ defines the discontinuity surface structure). 

%

\begin{remark}
The structure of the functional $\varphi(z)$ is natural as the value \linebreak $ h_i(x(t), z(t)) $, $i = \overline{1, n}$,
at each fixed $t \in [0, T]$ is just the Euclidean distance from the point $z_i(t)$
to the set $F_i (x(t))$; functional (\ref{3.13}) is half the sum of squares of the deviations in $L^2_n[0,T]$ norm of the trajectories $z_i(t)$ from the sets $F_i(x)$, $i = \overline{1, n}$, respectively; the meaning of functionals (\ref{3.14}), (\ref{3.14'}) structures is obvious.
\end{remark}


\section{Necessary minimum conditions of the functional ${I(z, c)}$ \\ in a particular case}

It is obvious that the point $x^*$ is Problem 1 solution iff the functional $I(z, c)$ vanishes at the corresponding point, i. e. $I(z^*, c^*) = 0$. In order to obtain a more constructive minimum condition (which is useful for developing a numerical method for solving the original problem), let us study the differential properties of the functional $I(z, c)$. Suppose that the trajectories $x_i(t)$, $i = \overline{1,n}$, vanish only at isolated time moments of the interval $[0, T]$. This assumption is natural if the discontinuity surfaces do not contain any of these trajectories vanishing on the interval $[0, T]$ subset of nonzero measure. Then similarly to work \cite{Fominyh1}, under the assumption made, it is proved that the functional $I(z, c)$ is G${\rm\hat{a}}$teaux differentiable.

The proof is carried out with the help of classical variations of the functionals $\varphi(z)$, $\chi(z)$ and $\omega(z, c)$ and uses Lebesgue's dominated convergence theorem, as well as such known facts as the support function additivity in the first argument, Lagrange's mean value theorem and integration by parts. Let us formulate the theorem (see the definition of the function $\psi_i^*(x,z)$, $i = \overline{1, n}$, in the next paragraph).

\begin{theorem}
\label{th:1}
{Let the trajectories $x_i(t)$, $i = \overline{1,n}$, vanish only at isolated time moments of the interval $[0, T]$. Then the functional $I(z, c)$ is G${\rm\hat{a}}$teaux differentiable and its gradient at the point $(z, c)$ is expressed by the formula
$$
\nabla I(z, c) = \Bigg[ \sum_{i=1}^{n}{h_i(x, z)} \psi_i^{*}(x,z) {\bf e_i} -$$ $$- \sum_{i=1}^{n} \int_{t}^{T} {h_i(x(\tau), z(\tau))}
 \frac{\partial c_i(F_i(x(\tau)), \psi_i^{*}(x(\tau),z(\tau)))}{\partial x} d\tau  +
$$
$$
+ \sum_{j \in J} \left( {x_0}_j + \int_0^T z_j(t) dt - {x_T}_j \right) {\bf e_j} + \sum_{i=1}^m \int_t^T s_i(x(\tau), c) \frac{\partial s_i(x(\tau), c)} {\partial x} d \tau, $$ $$  \sum_{i=1}^m \int_0^T s_i(x(\tau), c) \frac{\partial s_i(x(\tau), c)} {\partial c} d \tau \Bigg].
$$
}
\end{theorem}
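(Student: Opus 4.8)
The plan is to establish Gâteaux differentiability of $I(z,c) = \varphi(z) + \chi(z) + \omega(z,c)$ by computing the directional derivative of each summand along an arbitrary direction $(g, d) \in P_n[0,T] \times R^\ell$ and verifying that the resulting limit is linear and continuous in $(g,d)$, hence representable by the claimed gradient. Since $\chi$ is a plain quadratic functional in the integrals of the components $z_j$, its variation is immediate and contributes the term $\sum_{j\in J}\big({x_0}_j + \int_0^T z_j(t)\,dt - {x_T}_j\big){\bf e_j}$ in the $z$-slot (and nothing in the $c$-slot). The functional $\omega(z,c) = \frac12\int_0^T s^2\big(x_0 + \int_0^t z\,d\tau, c\big)\,dt$ is handled by differentiating under the integral sign using the $C^1$ smoothness of $s$ in both arguments and then, for the $z$-slot, interchanging the order of integration (a Fubini/Dirichlet argument): the inner variation produces $\int_0^t g(\tau)\,d\tau$ paired with $\partial s_i/\partial x$, and swapping the $t$- and $\tau$-integrals converts this into $\sum_{i=1}^m \int_t^T s_i(x(\tau),c)\frac{\partial s_i(x(\tau),c)}{\partial x}\,d\tau$ paired against $g(t)$; the $c$-slot is the direct term $\sum_{i=1}^m \int_0^T s_i(x(\tau),c)\frac{\partial s_i(x(\tau),c)}{\partial c}\,d\tau$.

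The substantive part is $\varphi(z) = \frac12\int_0^T h^2\big(x_0 + \int_0^t z\,d\tau, z(t)\big)\,dt$, and here I would proceed exactly as in \cite{Fominyh1}. For each $i$ and each $t$, $h_i(x,z) = \max_{\psi_i\in S_1}\max\{0, \langle z_i,\psi_i\rangle - c(F_i(x),\psi_i)\}$; since $S_1$ is the (two-point) unit sphere in $R^1$ this maximum is attained, and one writes $\psi_i^*(x,z)$ for a maximizer. The key observation is that on the complement of the isolated zero set of the $x_i$, the support function $c(F_i(x),\psi_i) = \psi_i A_i x + \overline a_i |x||\psi_i|$ is $C^1$ in $x$, so $h_i$ is locally Lipschitz and one can write the first variation of $h_i$ along $(g, \int_0^\cdot g)$ using the maximizer $\psi_i^*$ and Danskin-type differentiation of the max (the inner $\max\{0,\cdot\}$ causes no trouble off the set where $h_i=0$, and where $h_i=0$ the squared functional $h_i^2$ has vanishing variation anyway). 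Expanding $\frac12 h_i^2$, the chain rule gives a boundary-type term $h_i\langle\psi_i^*, g_i\rangle$ and an integral term $-h_i\frac{\partial c_i}{\partial x}\cdot\int_0^t g\,d\tau$; integrating the latter by parts (equivalently, swapping the order of integration as for $\omega$) yields the term $-\sum_i\int_t^T h_i(x(\tau),z(\tau))\frac{\partial c_i(F_i(x(\tau)),\psi_i^*(x(\tau),z(\tau)))}{\partial x}\,d\tau$ paired against $g(t)$, and collecting the boundary contribution gives $\sum_i h_i(x,z)\psi_i^*(x,z){\bf e_i}$. Assembling the three variations and reading off the coefficient of $(g,d)$ produces the stated $\nabla I(z,c)$.

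The main obstacle is the rigorous passage to the limit $\alpha \downarrow 0$ in the difference quotient for $\varphi$: one must justify differentiating under the integral, which requires a dominating integrable bound on the family of difference quotients of $h_i^2$, and one must handle the fact that the maximizer $\psi_i^*(x,z)$ may jump (it is only piecewise constant in $t$ and can change as $\alpha$ varies). This is exactly where the hypothesis that each $x_i$ vanishes only at isolated times is used: off a finite set, $h_i^2$ is genuinely differentiable with a locally bounded derivative, the bad times form a null set, and Lebesgue's dominated convergence theorem lets one pass the limit inside. I would also need the elementary continuity/measurability of $t \mapsto \psi_i^*(x(t),z(t))$ and the fact that the directional derivative so obtained is in fact linear in $(g,d)$ — which follows because the only potential source of nonlinearity, the selection of $\psi_i^*$, drops out: wherever two maximizers compete one has $h_i = 0$, killing the ambiguous term. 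Once linearity and boundedness (continuity) of the directional derivative in the $L_2^n[0,T]\times R^\ell$ sense are in hand, the Riesz identification from the Remark after Definition 2 gives the gradient in the stated form.
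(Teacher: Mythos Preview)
Your proposal is correct and follows essentially the same approach as the paper, which itself defers to \cite{Fominyh1}: classical variation of each summand, Lebesgue's dominated convergence theorem to pass the limit inside the integral (using the isolated-zeros hypothesis to ensure a.e.\ differentiability of the integrand), integration by parts/Fubini to move the $\int_0^t g$ factor into the form $\int_t^T(\cdot)\,d\tau$, and the observation that the maximizer $\psi_i^*$ is unique whenever $h_i>0$ so that the directional derivative comes out linear. The only cosmetic difference is that the paper explicitly names Lagrange's mean value theorem as the tool for producing the dominating bound on the difference quotients, whereas you leave this implicit in ``locally bounded derivative''; otherwise the arguments coincide.
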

\begin{proof}
 The detailed proof is carried out similarly as in \cite{Fominyh1}.
 \end{proof}

 Note that for the functional $I(z, c)$ G${\rm\hat{a}}$teaux differentiability it is significantly that the vector $\psi_i^{*}(x, z)$ is unique in the case $ \ell_i (\psi_i, x, z) > 0$, $i \in \{1..n\}$; namely:
 due to the structure of functional (\ref{3.12}) it is easy to check that in the case $ \ell_i (\psi_i, x, z) > 0$
 maximum of the expression
$ \max\{0, \ell_i ({\psi_i}, x, z)\} =$ $= \ell_i ({\psi_i}, x, z) $
is achieved at the only element $\psi_i^{*}(x, z) \in S_1$, $i \in \{1..n\}$.
A simple justification of this fact is carried out as in \cite{DolgFom} and is based on the convexity of the set $F_i(x)$, $i = \overline{1,n}$, at each $x$ and on the known properties of a support function. In the case $\ell_i (\psi_i, x, z) \leq 0$ we have put $\psi_i^{*}(x,z) = \psi_0$, $i \in \{1..n\}$, where $\psi_0 \in S_1$ is fixed. These considerations are valid for a more general case considered in work \cite{Fominyh1}. Note that in the particular case of this paper one can write out the explicit expression for the  vector $\psi_i^{*}(x, z)$ in the case $ \ell_i (\psi_i, x, z) > 0$, $i \in \{1..n\}$, as follows: 
$$\psi_i^{*}(x, z) = \mathrm{sign} \left( z_i - A_i x \right),$$
note that $ z_i - A_i x  \neq 0$ in the case $ \ell_i (\psi_i, x, z) > 0$.

The following theorem formulates the known minimum condition for a Gateaux differentiable functional. 

\begin{theorem}
\label{th:2}
{Let the trajectories $x_i(t)$, $i = \overline{1,n}$, vanish only at isolated time moments of the interval $[0, T]$. In order for the point $(z^*, c^*)$ to minimize the functional $I(z, c)$, it is necessary that
$$
0_n \times  {\bf 0_\ell} = \Bigg[ \sum_{i=1}^{n}{h_i(x^*, z^*)} \psi_i^{*}(x^*,z^*) {\bf e_i} -$$ $$- \sum_{i=1}^{n} \int_{t}^{T} {h_i(x^*(\tau), z^*(\tau))}
 \frac{\partial c_i(F_i(x^*(\tau)), \psi_i^{*}(x^*(\tau),z^*(\tau)))}{\partial x} d\tau  +
$$
$$
+ \sum_{j \in J} \left( {x_0}_j + \int_0^T z^*_j(t) dt - {x_T}_j \right) {\bf e_j} + \sum_{i=1}^m \int_t^T s_i(x^*(\tau), c^*) \frac{\partial s_i(x^*(\tau), c^*)} {\partial x} d \tau, $$ $$ \sum_{i=1}^m \int_0^T s_i(x^*(\tau), c^*) \frac{\partial s_i(x^*(\tau), c^*)} {\partial c} d \tau \Bigg]
$$
where $0_n$ is a zero element of the space $P_n[0, T].$
}
\end{theorem}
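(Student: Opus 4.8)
The statement is the familiar Fermat stationarity principle for a Gâteaux differentiable functional on a normed space, so the plan is to derive it from the minimality of $(z^*, c^*)$ together with the differentiability established in Theorem~\ref{th:1}, and then to substitute the gradient formula of that theorem.

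First I would take an arbitrary direction $(g, \eta) \in P_n[0, T] \times R^\ell$. Since $(z^*, c^*)$ minimizes $I$, for every sufficiently small $\alpha > 0$ one has $I(z^* + \alpha g, c^* + \alpha \eta) - I(z^*, c^*) \geq 0$; dividing by $\alpha$ and letting $\alpha \downarrow 0$, the Gâteaux differentiability of $I$ at $(z^*, c^*)$ — which holds by Theorem~\ref{th:1}, whose hypothesis is precisely the one assumed here — yields
\[
\big\langle \nabla I(z^*, c^*), (g, \eta) \big\rangle = \frac{\partial I(z^*, c^*)}{\partial (g, \eta)} \geq 0 ,
\]
the pairing being that of $L_2^n[0, T] \times R^\ell$ in accordance with the identification of $\big(C_n[0,T], || \cdot ||_{L_2^n[0,T]}\big)^*$ with $L_2^n[0, T]$ recalled in Section~1. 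Applying the same inequality to the opposite direction $-(g, \eta)$ and using the linearity of the Gâteaux derivative in the direction, we obtain $\big\langle \nabla I(z^*, c^*), (g, \eta) \big\rangle = 0$.

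As $(g, \eta)$ ranges over all of $P_n[0, T] \times R^\ell$, and $P_n[0, T]$ is dense in $L_2^n[0, T]$, the element $\nabla I(z^*, c^*) \in L_2^n[0, T] \times R^\ell$ annihilates a dense subset and hence equals $0_n \times {\bf 0_\ell}$. Writing $\nabla I(z^*, c^*)$ out by means of the formula of Theorem~\ref{th:1} with $z = z^*$, $c = c^*$, $x = x^*$ gives precisely the asserted identity.

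The argument is entirely standard; the only place needing a word of justification is the passage from ``$\nabla I(z^*, c^*)$ pairs to zero against every $(g,\eta) \in P_n[0,T] \times R^\ell$'' to ``$\nabla I(z^*, c^*)$ is the zero element'', which rests on the density of $P_n[0, T]$ in $L_2^n[0, T]$. There is no genuine analytical obstacle here, since all the substantive work — establishing differentiability and computing the gradient — was already carried out in Theorem~\ref{th:1}.
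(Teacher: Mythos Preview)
Your argument is correct and is precisely the standard Fermat stationarity principle that the paper has in mind; the paper itself does not give a proof but simply states that Theorem~\ref{th:2} ``formulates the known minimum condition for a Gateaux differentiable functional'', relying entirely on Theorem~\ref{th:1} for the differentiability and the gradient formula. Your write-up just makes that folklore step explicit, so there is nothing to add.
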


It is apparent, that $I(z^*, c^*) = 0$ is necessary and sufficient minimum condition for this functional and in this case the equality of Theorem 2 is automatically satisfied.

\begin{remark}
Note that formally the behavior of an object is described by differential inclusion (\ref{2.10}) only if $s(x, c) = 0$. In practice it is obvious that applying any optimization method, one is able to find only an approximate minimizer of functional $I(z, c)$, so if the approximate global minimizer $(\overline x, \overline c)$ is obtained, then one has $ \omega(\overline x, \overline c) \leq \varepsilon_s$. On the one hand, one can be satisfied with this (set in advance) accuracy based on physical considerations (see Remark 3). On the other hand, in some problems it is possible to provide the exact equality $ s(\overline x, \overline c) = 0$ (for example, by expressing one of the variables through the obtained ones) and then checking that such a substitution does not affect the fulfillment of differential inclusion and restriction on the right endpoint (see Example 1 where such a procedure is implemented). In practice the restriction on the right endpoint is also satisfied with some permissible accuracy, so one has $\chi(z) \leq \varepsilon_{x_T}$. 

Also note that the desired trajectory functional space is not closed in $L_n^2[0, T]$ metric so formally some optimization method can lead to unacceptable points at some iterations. But since the functional space considered is everywhere dense in the space $L_n^2[0, T]$ in practice we just approximate such a point with an acceptable one during numerical procedure.
\end{remark}

{\bf Example 1.}
Consider the following system
$$\dot x_1(t) \in [-1, 1] (|x_1(t)|+|x_2(t)|+|x_3(t)|), \quad \dot x_2(t) = x_1(t) - x_2(t), \quad \dot x_3(t) = x_2(t)$$
with the boundary conditions
$$x_1(0) = -3, \ x_2(0) = 4, \ x_3(0) = 6, \quad x_1(1) = 0.$$
Let the discontinuity surface in this example be of the form
$$s(x, c) = x_1 + c_1 x_2 + c_2 = 0.$$

A simplest steepest descent method (in the functional space) \cite{Kantorovich} with some modifications (see Remark 7, b) below) was used in order to minimize the functional $I(z, c)$ in this problem. 

The point $(z_{\{0\}}, c_{\{0\}}) = (0, 0, 0, 1.25, -1.25)'$ was taken as the initial one (note that here $(0, 0, 0)'$ is a zero point in the functional space $P_3 [0,1]$ and the point $(1.25, -1.25)'$ belongs to the space $R^2$). Herewith, we have $I(z_{\{0\}}, c_{\{0\}}) \approx 37.28125$. Put \linebreak $x_1(t) := -c_{\{0\}1} x_2(t) - c_{\{0\}2}$ and integrate the last two equations of the system given with one of the known numerical methods (for this example the Runge-Kutta 4-5-th order method was used). Check that the first inclusion is also satisfied and finally have $x_1(1) \approx 0.10175$; we see that the error on the right endpoint is of the order $10^{-1}$. 

At the 60-th iteration the point $(z_{\{60\}}, c_{\{60\}})' $ was constructed and we approximately put $(z^*, c^*) = (z_{\{60\}}, c_{\{60\}})$. Herewith $c^* \approx (0.98467, -0.93868)'$, $I(z^*, c^*) \approx 0.00015$ and $|| \nabla I(z^*, c^*) ||_{L^2_3 [0,1] \times R^2} \approx 0.02009$. The point $z^*$ is not given here for two reasons: 1) for brevity (as it is constructed in the form of a rather bulky piecewise continuous vector-function) and 2) because only the parameters $c^*$ are finally used to estimate the result obtained (see the next paragraph). 

Having obtained the parameters $c^*$, put $x_1(t) := -c_1^* x_2(t) - c_2^*$ and substitute them into the last two equations of the system given. Integrate this closed-loop system with the Runge-Kutta 4-5-th order method, then check that the first inclusion is also satisfied and finally have $x_1(1) \approx -0.00431$, so we see that the desired value on the right endpoint is achieved with an error of the order $5 \times 10^{-3}$. So this value has been improved via ``correcting'' the parameters of the surface considered (see Remark 2). Picture 1 demonstrates the results of calculations. The black lines denote the curves which were obtained via the method of the paper, while the dashed red lines denote the curves obtained via integration of the closed-loop system (and via the relation $x_1(t) = -c_1^* x_2(t) - c_2^*$).

   \begin{figure*}[h!]
\begin{minipage}[h]{0.3\linewidth}
\center{\includegraphics[width=1\linewidth]{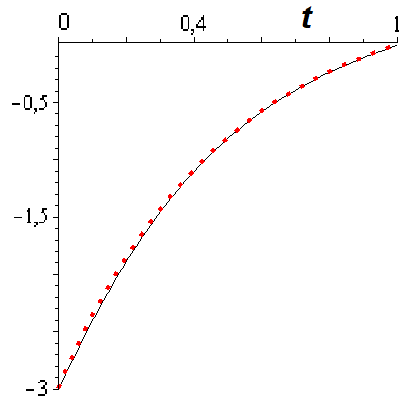} }
\end{minipage}
\hfill
\begin{minipage}[h]{0.3\linewidth}
\center{\includegraphics[width=1\linewidth]{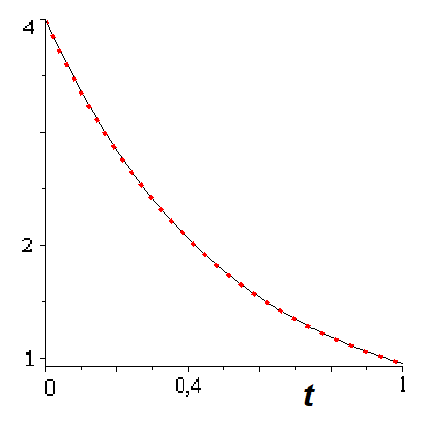} }
\end{minipage}
\hfill
\begin{minipage}[h]{0.3\linewidth}
\center{\includegraphics[width=1\linewidth]{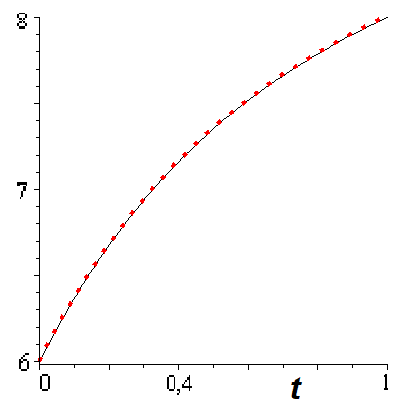} }
\end{minipage}
\caption{Example 1. The trajectories $x_1^*(t)$, $x_2^*(t)$, $x_3^*(t)$.}
\end{figure*}

\begin{remark}
a) In practice differential inclusions (\ref{8}) are satisfied at most iterations, since with $i = \overline{1, m}$ the function $z_i(t) - A_i  x(t)$ values are usually not very large, so they are inside the interval $[-\overline{a}_i, \overline{a}_i] |x(t)|$ during the interval $[0, T]$ (recall that the gain factor $\overline{a}_i$ is sufficiently large). This fact significantly simplifies calculations insofar as in the case $\varphi(z) > 0$ (see (\ref{3.133})) the functional~$\varphi(z)$ gradient is the most complicated (comparatively to the other functionals considered).  
  
b) During computation procedure it was noted that $z_i$, $i = \overline{1, n}$, are so-called ``fast'' variables while $c_k$, $k = \overline {1, \ell}$, are so called ``slow'' variables. This means that the trajectories may change relatively significantly with the fixed parameters while the variables~$c_k$, $k = \overline {1, \ell}$, change slightly with the fixed variables $z_i$, $i = \overline{1, n}$,  throughout iterations. So in practice it was reasonable to fix the parameters $c_k$, $k = \overline {1, \ell}$, and to make one iteration with a relatively large descent step (in order to minimize the functional in the variables $z_i$, $i = \overline{1, n}$) and then to fix the trajectories $z_i$, $i = \overline{1, n}$, and to make several iterations with small descent steps (in order to minimize the functional in the variables $c_k$, $k = \overline{1, \ell}$) and to repeat this process.
\end{remark}

\section{Differential properties of the functional $I(x, c)$ \\ in a more general case}
In this section, it will be more convenient for presentation not to go to the space of derivatives but to work with the following functional (we keep the previous notation for it): 
$$I(x, c) = \varphi(x) + \chi(x) + \omega(x, c),$$
where as previously (we omit the $1/2-$factor in the functional $\varphi(x)$ for its more convenient exploration below) 
\begin{equation}
\label{6.1}
\varphi(x) =  \int_0^T h^2 \big( x(t), \dot x (t) \big) dt, 
\end{equation}
$$\chi(x) = \frac{1}{2} \sum_{j \in J} \left( {x_0}_j + \int_0^T \dot x_j(t) dt - {x_T}_j \right)^2,$$
$$\omega(x, c) = \frac{1}{2} \int_0^T s^2 (x(t), c)  dt.$$

In section 5 we have shown that the functional $I(z, c)$ is G${\rm\hat{a}}$teaux differentiable, and according to the assumption made there, the case was excluded when at least one of the phase variables would be identically equal to zero on some time interval of the set $[0, T]$ of nonzero length. Let us now assume that some of the trajectories can be identically equal to zero on some nonzero time interval of the segment $[0, T]$ and study the differential properties of the functional $I(z, c)$ in this case. 

For simplicity consider the case $n=2$ and only the functions $\ell_1(\psi_1, x, z)$ and $h_1(x, z)$  (here we denote them $\ell(\psi_1, x_1, x_2, z_1)$ and $h(x_1, x_2, z_1)$ respectively) and the time interval $[t_1, t_2] \subset [0, T]$ of nonzero length; the general case is considered in a similar way. Then we have $ \ell (\psi_1, x_1, x_2, z_1) = $ \linebreak $=z_1 \psi_1 -  a_1 x_1 \psi_1 - a_2 x_2 \psi_1 - b |x_1| |\psi_1| - b |x_2| |\psi_1|$, where $a_1 := a_{1,1}$, $a_2 := a_{1,2}$, $b := \overline a_1$. Fix some point $(x_1, x_2) \in R^2$. Let $x_1(t) = 0$, $z_1(t) = 0$, $x_2(t) > 0$ at $t \in [t_1, t_2]$; other cases are studied in a completely analogous fashion.

a) Suppose that $h_1 (x, z) > 0$, i. e. $h_1(x, z) = \max_{\psi_1 \in S_1}\ell_1 (\psi_1, x, z) > 0$. 

Our aim is to apply the corresponding theorem on a directional differentiability from \cite{BonnansShapiro}. The theorem of this book considers the inf-functions are considered so we will apply this theorem to the function $-\ell(\psi_1, x_1, x_2, z_1)$. For this check that the function $h(x_1, x_2, z_1)$ satisfies the following conditions: \newline
i) the function $\ell (\psi_1, x_1, x_2, z_1)$ is continuous on $S_1 \times R^2 \times R$; \newline
ii)  there exist a number $\beta$ and a compact set $C \in R$ such that for every $(x_1, x_2, z_1)$ in the vicinity of the point $(0, x_2, 0)$ the level set 
$$\mathrm {lev}_\beta \ell(\cdot, x_1, x_2, z_1) = \{\psi_1 \in S_1 \ | \ -\ell(\psi_1, x_1, x_2, z_1) \leqslant \beta\} $$
is nonempty and is contained in the set $C$; \newline
iii) for any fixed $\psi_1 \in S_1$ the function $\ell(\psi_1, \cdot, \cdot,\cdot)$ is directionally differentiable at the point $(0, x_2, 0)$; \newline
iv) if $d = [d_1, d_2] \in R^2 \times R$, $\gamma_n \downarrow 0$ and $\psi_{1_{n}}$ is a sequence in $C$, then $\psi_{1_{n}}$ has a limit point $\overline \psi_1$ such that 
$$\mathrm{lim} \sup\limits_{n \rightarrow \infty} \frac{-\ell(\psi_{1_{n}}, 0 + \gamma_n d_{1,1}, x_2+\gamma_n d_{1,2}, 0+\gamma_n d_2) - (-\ell(\psi_{1_{n}}, 0, x_2, 0))}{\gamma_n} \geqslant$$ $$\geqslant \frac{\partial(-\ell(\overline \psi_1, 0, x_2, 0))}{\partial d}, $$
where $\displaystyle{\frac{\partial\ell(\overline \psi_1, 0, x_2, 0)}{\partial d}}$ is the derivative of the function $\ell (\overline \psi_1, x_1, x_2, z_1)$ at the point $(0,x_2,0)$ in the direction $d$. 

The verification of conditions i), ii) is obvious. 

In order to verify condition iii), it suffices to note that since $b > 0$, then at the fixed $\psi_1 \in S_1$ the function $- b |x_1| |\psi_1|$ is superdifferentiable (and hence is differentiable in directions) at the point $0$, herewith, its superdifferential at the point $0$ is the segment $\mathrm{co}\{-b |\psi_1|, b |\psi_1|\}$. An explicit expression for the derivative of this function at the point~$0$ in the direction $d_{1,1}$ is $-b |d_{1,1}| |\psi_1|$. 

Finally, check condition iv). Let $[d_1, d_2] \in R^2 \times R$, $\gamma_n \downarrow 0$ and $\psi_{1_n}$ is some sequence from $C$. Calculate
$$\mathrm{lim} \sup\limits_{n \rightarrow \infty} \frac{\ell(\psi_{1_n}, 0 + \gamma_n d_{1,1}, x_2 + \gamma_n d_{1,2}, 0 + \gamma_n d_2) - \ell(\psi_{1_n}, 0, x_2, 0)}{\gamma_n} =$$ $$=\mathrm{lim} \sup\limits_{n \rightarrow \infty} \frac{\gamma_n d_2 \psi_{1_n} - \gamma_n a_1 d_{1,1} \psi_{1_n} - \gamma_n a_2 d_{1,2} \psi_{1_n} - b |\gamma_n d_{1,1}| |\psi_{1_n}|-b \gamma_n d_{1,2} |\psi_{1_n}|}{\gamma_n} =$$ $$ =\mathrm{lim} \sup\limits_{n \rightarrow \infty} \left( d_2 \psi_{1_n} - a_1 d_{1,1} \psi_{1_n} - a_2 d_{1,2} \psi_{1_n} - b |d_{1,1}| |\psi_{1_n}| - b d_{1,2} |\psi_{1_n}|  \right)$$
Let $\overline \psi_1$ be a limit point of the sequence $\psi_{1_n}$. Then by the directional derivative definition we have 
$$\frac{\partial\ell(\overline \psi_1, 0, x_2, 0)}{\partial d} = d_2 \overline \psi_1 - a_1 d_{1,1} \overline \psi_1 - a_2 d_{1,2} \overline \psi_1 - b |d_{1,1}| |\overline \psi_1| - b d_{1,2} |\overline \psi_1|.$$ From last two equalities one obtains that condition iv) is fulfilled .

Thus, the function $h(x_1, x_2, z_1)$ satisfies conditions i)-iv), so it is differentiable in directions at the point $(0, x_2, 0)$ \cite{BonnansShapiro}, and its derivative in the direction $d$ at this point is expressed by the formula
$$
\frac{\partial h(0, x_2, 0)}{\partial d} = \sup_{\psi_1 \in S(0, x_2, 0)} \frac{\partial \ell(\psi_1, 0, x_2, 0)}{\partial d},
$$
where $S(0, x_2, 0) = \mathrm{arg} \max_{\psi_1 \in S_1} \ell(\psi_1, 0, x_2, 0)$. However, as shown above, in the considered problem the set $S(0, x_2, 0)$ consists of the only element $\psi_1^*(0, x_2, 0)$, hence
$$
\frac{\partial h(0, x_2, 0)}{\partial d} = \frac{\partial \ell(\psi_1^*(0, x_2, 0), 0, x_2, 0)}{\partial d},
$$

Finally, recall that by the directional derivative definition one has the equality
$$\frac{\partial \ell(\psi_1^*(0, x_2, 0), 0, x_2, 0)}{\partial d} = d_2 \psi_1^* - a_1 d_{1,1} \psi_1^* - a_2 d_{1,2} \psi_1^* - b |d_{1,1}| |\psi_1^*| - b d_{1,2} |\psi_1^*|,$$
where we have put $\psi^*_1 := \psi_1^*(0, x_2, 0)$.

From last two equalities we finally obtain that the function $h(x_1, x_2, z_1)$ is superdifferentiable at the point $(0, x_2, 0)$, but it is also positive in this case, hence the function $h^2(x_1, x_2, z_1)$ is superdifferentiable at the point $(0, x_2, 0)$ as a square of a superdifferentiable positive function (see \cite{demrub}).

b) In the case $h_1 (x, z) = 0$ it is obvious that the function $h^2(x_1, x_2, z_1)$ is differentiable at the point $(0, x_2, 0)$ and its gradient vanishes at this point.

\begin{remark}
It is easy to see that the proof above may be significantly simplified if one uses the fact that $\psi_1, \psi_{1_n}, \overline \psi_1, \psi_1^* \in S_1$.  However, since the statement about the superdifferentiability of functions having a structure similar to that of the function $h_1(x, z)$ is of independent interest, so such a way of proof is used, which may be applied to the more general case when $\psi_1$ belongs to an arbitrary compact subset of the space $R$ (therefore, we consciously did not put $|\psi_1| = |\psi_{1_n}| = |\overline \psi_1| = |\psi_1^*| = 1$ in the proof). 
\end{remark}

Above we have considered a particular case. Dividing the interval $[0,T]$ into the segments in which some of the phase trajectories are identically equal to zero and some retain a definite sign (in this section we suppose that there is a finite number of such segments) and arguing in each of these intervals similarly to the case considered and also using the superdifferential calculus rules (\ref{0.4}), (\ref{0.5}), write down the superdifferential of the function $h_i(x,z)$. With $i = \overline{1,m}$ one has
$$\overline \partial h_i(x,z) = \psi^*_i {\bf e_{i+n}} - \psi^*_i [A'_i, {\bf 0_n}] + \sum_{j=1}^n \overline \partial(-\overline{a}_i |x_j| |\psi_i^*|),$$
where with $j = \overline{1,n}$ we have
$$ \overline \partial(-\overline{a}_i |x_j| |\psi_i^*|) = \left\{
\begin{array}{lll}
-\overline{a}_i |\psi_i^*| {\bf e_j}, \ &\text{if} \ x_j > 0,  \\
\overline{a}_i  |\psi_i^*| {\bf e_j}, \ &\text{if} \ x_j < 0, \\
\mathrm{co} \big \{-\overline{a}_i |\psi_i^*| {\bf e_j},  \overline{a}_i |\psi_i^*| {\bf e_j} \big \}, \ &\text{if} \ x_j = 0.
\end{array}
\right.
$$
With $i = \overline{m+1,n}$ one has
$$\overline \partial h_i(x,z) = \psi^*_i {\bf e_{i+n}} - \psi^*_i [A'_i, {\bf 0_n}].$$

Thus, in a more general case considered in this section, when the segment~$[0, T]$ may be divided into a finite number of intervals, in every of which each phase trajectory is either identically equal to zero or retains a certain sign, the integrand of functional (\ref{6.1}) is superdifferentiable as a square of a superdifferentiable nonnegative function \cite{demrub}. It turns out that this fact allows us to conclude that functional (\ref{6.1}) itself is superdifferentiable (in the case considered). 
 
\begin{theorem}
\label{th:3}  
Let the interval $[0, T]$ may be divided into a finite number of intervals, in every of which each phase trajectory is either identically equal to zero or retains a certain sign. Then the functional
 $
 \varphi(x) 
 $
is superdifferentiable, i. e. 
\begin{equation}
\label{10} 
\frac{\partial \varphi(x)}{\partial g} = \lim_{\alpha \downarrow 0} \frac{1}{\alpha} \big(\varphi(x+\alpha g) - \varphi(x)\big) = \min_{w \in \overline \partial \varphi(x)} w(g) \end{equation}
Here the set $\overline \partial \varphi(x)$ is of the following form
\begin{equation}
\label{13} 
\overline \partial \varphi(x) = \Bigg\{ w \in \left(C_n[0, T], || \cdot ||_{L_2^n[0, T]} \right)^* \ \big|\end{equation}

$$ \ w(g) = \int_0^T \langle w_1(t), g(t) \rangle dt + \int_0^T \langle w_2(t), \dot g(t) \rangle dt \quad \forall g \in C_n[0,T], \, \dot g \in P_n[0,T],  
$$
$$
w_1(t), w_2(t) \in L^n_\infty[0, T], \quad [w_1(t), w_2(t)] \in \overline \partial h^2(x(t), \dot x(t)) \quad for \ a. e. \ t \in [0, T] \Bigg\}.$$
 \end{theorem}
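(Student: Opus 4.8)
The idea is to lift the pointwise (finite‑dimensional) superdifferentiability of the integrand $h^2(\cdot,\cdot)$, which has just been established under the stated partition hypothesis, to superdifferentiability of the integral functional $\varphi$, by differentiating under the integral sign and then swapping the minimum with the integral via a measurable selection. First I would compute the directional derivative directly: for $g\in C_n[0,T]$ with $\dot g\in P_n[0,T]$,
$$\frac{\varphi(x+\alpha g)-\varphi(x)}{\alpha}=\int_0^T\frac{1}{\alpha}\Big(h^2\big(x(t)+\alpha g(t),\dot x(t)+\alpha\dot g(t)\big)-h^2\big(x(t),\dot x(t)\big)\Big)\,dt.$$
By the analysis preceding the theorem, $h^2$ is superdifferentiable at $(x(t),\dot x(t))$ for a.e.\ $t$, so the integrand converges pointwise a.e., as $\alpha\downarrow0$, to $\min_{[w_1,w_2]\in\overline\partial h^2(x(t),\dot x(t))}\big(\langle w_1,g(t)\rangle+\langle w_2,\dot g(t)\rangle\big)$. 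To pass to the limit under the integral I would use Lebesgue's dominated convergence theorem: $h^2$ is locally Lipschitz (a square of a combination of $\max$, $|\cdot|$ and linear functions of bounded quantities), and since $x,\dot x,g,\dot g$ are bounded on $[0,T]$, all the points $(x(t)+\alpha g(t),\dot x(t)+\alpha\dot g(t))$ with $\alpha\in[0,1]$, $t\in[0,T]$, lie in one bounded set, so a single Lipschitz constant $L$ serves and the difference quotient is dominated by the constant $L(\|g\|_{C_n[0,T]}+\|\dot g\|_{P_n[0,T]})$. This gives
$$\frac{\partial\varphi(x)}{\partial g}=\int_0^T\min_{[w_1,w_2]\in\overline\partial h^2(x(t),\dot x(t))}\big(\langle w_1,g(t)\rangle+\langle w_2,\dot g(t)\rangle\big)\,dt.$$

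Next I would exchange the minimum and the integral. The multifunction $t\mapsto\overline\partial h^2(x(t),\dot x(t))$ has nonempty convex compact values, is uniformly bounded (its values are contained in a fixed ball, by the same Lipschitz bound on the bounded set above), and is measurable — the explicit formulas for $\overline\partial h_i$, hence for $\overline\partial h^2$, depend on $t$ only through $x(t),\dot x(t)$ and the finitely many sign changes of the coordinates $x_j(t)$. For fixed $g$ the map $(t,[w_1,w_2])\mapsto\langle w_1,g(t)\rangle+\langle w_2,\dot g(t)\rangle$ is a Carath\'eodory function, so by Filippov's measurable selection lemma there is a measurable selection $t\mapsto[w_1^g(t),w_2^g(t)]\in\overline\partial h^2(x(t),\dot x(t))$ attaining the pointwise minimum; by uniform boundedness it lies in $L^n_\infty[0,T]$ componentwise, hence defines an element $w^g$ of the set $\overline\partial\varphi(x)$ of (\ref{13}) with $w^g(g)=\frac{\partial\varphi(x)}{\partial g}$. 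Conversely, any $w\in\overline\partial\varphi(x)$ satisfies $w(g)=\int_0^T(\langle w_1(t),g(t)\rangle+\langle w_2(t),\dot g(t)\rangle)\,dt\ge\frac{\partial\varphi(x)}{\partial g}$, the inequality holding under the integral since $[w_1(t),w_2(t)]\in\overline\partial h^2(x(t),\dot x(t))$ a.e. Therefore $\frac{\partial\varphi(x)}{\partial g}=\min_{w\in\overline\partial\varphi(x)}w(g)$, the minimum attained at $w^g$, which is (\ref{10}).

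Finally I would check that $\overline\partial\varphi(x)$ is convex and weak$^*$ compact, as Definition~2 requires. Convexity is immediate, since a pointwise convex combination of two selections is again a selection by convexity of each $\overline\partial h^2(x(t),\dot x(t))$. The set is bounded in $L^n_\infty[0,T]\times L^n_\infty[0,T]$, hence bounded in $L^2_{2n}[0,T]$. For closedness: if a sequence of selections converges weakly in $L^2$, then by Mazur's lemma suitable convex combinations (themselves selections) converge strongly, hence a.e.\ along a subsequence, and the limit lies in the closed convex set $\overline\partial h^2(x(t),\dot x(t))$ for a.e.\ $t$; thus the weak limit is again a selection. A bounded, closed, convex subset of the Hilbert space $L^2_{2n}[0,T]$ is weakly compact, which under the identification of Remark~2 of $(C_n[0,T],\|\cdot\|_{L^2_n[0,T]})^*$ with $L^2_n[0,T]$ (applied coordinatewise, and, if one wishes to view $w$ as acting on $C_n[0,T]$ alone, after an integration by parts in the second term as in Theorem~\ref{th:1}) is precisely the required weak$^*$ compactness. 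Together with the directional derivative formula this completes the proof.

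\textbf{Expected main obstacle.} The dominated‑convergence step is routine; the real work is the measurable selection argument that interchanges the minimum with the integral, together with the verification of weak$^*$ (i.e.\ weak $L^2$) compactness of $\overline\partial\varphi(x)$ — in particular the closedness of the set of selections, where one genuinely needs the convexity of the pointwise superdifferentials (via Mazur's lemma), not just their compactness.
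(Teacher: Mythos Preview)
Your plan is correct and follows the same architecture as the paper's proof: pointwise superdifferentiability of $h^2$, passage to the limit under the integral via dominated convergence, interchange of $\min$ and $\int$ via Filippov's measurable selection lemma, and finally convexity plus weak$^*$ compactness of $\overline\partial\varphi(x)$. The execution differs in two places. For the domination step the paper performs an explicit case analysis on the intervals where some $x_j\equiv 0$, expanding the square in $h_i^2$ and tracking the individual summands; your appeal to local Lipschitz continuity of $h^2$ (as a square of a maximum of Lipschitz functions over the compact index set $S_1$) on the bounded range of $(x+\alpha g,\dot x+\alpha\dot g)$ is shorter and equally valid. For weak closedness of the selection set the paper first proves strong $L^2$-closedness (a.e.\ convergent subsequence, pointwise closedness of $\overline\partial h^2$) and then invokes convexity, whereas you go through Mazur's lemma; these are equivalent. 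The one loose point is your claim that uniform boundedness of $\overline\partial h^2(x(t),\dot x(t))$ follows ``by the same Lipschitz bound'': for superdifferentials in the sense of Definition~1 this is not automatic in general (the set satisfying \eqref{0.1} is not uniquely determined and need not be contained in the Lipschitz ball), and the paper instead argues via upper semicontinuity of the superdifferential map together with a Heine--Borel covering of $[0,T]$. In the present setting, however, the explicit formula for $\overline\partial h_i(x,z)$ displayed just before the theorem makes the uniform bound immediate, so your conclusion stands.
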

\begin{proof}
In accordance with definition (\ref{3'}) of a superdifferentiable functional, in order to prove the theorem, one has to check that

1) the derivative of the functional $\varphi(x)$ in the direction $g$ is actually of form (\ref{10}), 

2) herewith, the set $\overline \partial \varphi(x)$ is convex and weakly* compact subset of the space $\left( C_n[0, T], || \cdot ||_{L_2^n [0, T]} \right)^*$. 

Let us prove statement 1). 

At first show that the following relation is true.

\begin{equation}
\label{16} 
\lim_{\alpha \downarrow 0}\frac{1}{\alpha} \Big | \varphi(x+\alpha g) - \varphi(x) - \int_0^T  \min_{[w_1, w_2] \in \overline \partial h^2(x,z)} \big( \langle w_1(t), \alpha g(t) \rangle + \langle w_2(t), \alpha \dot g(t) \rangle \big) dt \Big| = 0.
 \end{equation} 
 
 Denote 
 \begin{equation}
\label{16'} 
f(t, \alpha) = \frac{1}{\alpha} \Big ( h^2(x(t)+\alpha g(t), \dot x(t)+\alpha \dot g(t)) - h^2(x(t), \dot x(t)) \Big) - 
 \end{equation} 
 $$ - \min_{[w_1, w_2] \in \overline \partial h^2(x,z)} \big( \langle w_1(t), g(t) \rangle + \langle w_2(t), \dot g(t) \rangle \big) .$$

Our aim is to prove relation (\ref{16}) via Lebesgue's dominated convergence theorem applied to the function $f(t, \alpha)$ (at $\alpha \downarrow 0$).

At first note that by superdifferential definition (\ref{0.1}) and by the superdifferentiability of the function $h^2(x,z)$ (proved at the beginning of this section) for each $t \in [0, T]$ we have $f(t, \alpha) \rightarrow 0$ when $\alpha \downarrow 0$.  

In the following two paragraphs we show that for every $\alpha > 0$ one has \linebreak $f(t, \alpha) \in L^1_{\infty}[0,T]$.

Insofar as $x, g \in C_n[0,T]$, $z, \dot g \in P_n[0,T]$ and the function $h^2(x,z)$ is continuous in its variables due to its structure \cite{demmal}, we obtain that for each $\alpha > 0$ the functions $t \rightarrow h^2(x(t),z(t))$ and $t \rightarrow h^2(x(t)+\alpha g(t), z(t)+ \alpha \dot g(t))$ belong to the space $L_\infty^1 [0,T]$.

Due to the upper semicontinuity of a superdifferential mapping \cite {demvas} and the structure of the superdifferential $\overline \partial h^2(x, z)$, it is easy to check that the mapping $t \rightarrow \overline \partial h^2(x(t), z(t))$ is upper semicontinuous, so it is measurable (see \cite{filblag}). Then due to the continuity of the function $g(t)$, the piecewise continuity of the function $\dot g(t)$ and due to the continuity of the scalar product in its variables we obtain that for each $\alpha > 0$ the mapping 
\begin{equation}
\label{6.18} t \rightarrow \min_{[w_1, w_2] \in \overline \partial h^2(x(t),z(t))} \big( \langle w_1(t), \alpha g(t) \rangle + \langle w_2(t), \alpha \dot g(t) \rangle \big) \end{equation} is upper semicontinuous \cite{aubenfr}, and then is also measurable \cite{filblag}. While proving statement 2) it will be shown that under the assumptions made the set $\overline \partial h^2(x, z)$ is bounded uniformly in $t \in [0, T]$, then by the continuity of the function $g(t)$ and the piecewise continuity of the function $\dot g(t)$ it is easy to check that for each $\alpha > 0$ mapping (\ref{6.18}) is also bounded uniformly in $t \in [0, T]$. So we finally have that for each $\alpha > 0$ mapping (\ref{6.18}) belongs to the \linebreak space $L_\infty^1 [0,T]$. 

Now we prove that the function {$f(t, \alpha)$} is dominated by some integrable function for all sufficiently small $\alpha > 0$. On the segments where the phase trajectories retain their signs this is done via standard technique using Lagrange's mean value theorem. Consider the interval $[t_1, t_2] \subset [0, T]$ of nonzero length. For simplicity consider the case $n=2$ and only the functions $\ell_1(\psi_1, x, z)$ and $h_1(x, z)$  (here denote them $\ell(\psi_1, x_1, x_2, z_1)$ and $h(x_1, x_2, z_1)$ respectively); the general case is considered in a similar way. Suppose that $x_2(t) = 0$  and $x_1(t) > 0$ at $t \in [t_1, t_2]$; other cases are studied in a completely analogous fashion.  In the case $h_1(x,z) = 0$ the proof is elementary.  So suppose $h_1 (x, z) > 0$, i. e. $h_1(x, z) = \max_{\psi_1 \in S_1}\ell (\psi_1, x, z) = \ell (\psi_1^*(x, z), x, z) $.  As is shown in the previous paragraph, the second summand in (\ref{16'}) is integrable functions. So it remains to consider for sufficiently small $\alpha > 0$ the first summand in (\ref{16'}), i.e. (in the considered case) the function 
 $$ \frac{1}{\alpha} \Big ( h_1^2(x +\alpha g , \dot x +\alpha \dot g ) - h_1^2(x , \dot x) \Big) = $$
 $$=\frac{1}{\alpha} \bigg( \Big[ (z_1 + \alpha \dot g_1) \psi_1^*(\alpha) - a_1(x_1 + \alpha g_1) \psi_1^*(\alpha) - a_2 \alpha g_2 \psi_1^*(\alpha) -$$ \begin{equation}
\label{6.18'}- b (x_1 + \alpha g_1) |\psi_1^*(\alpha)| - b |\alpha g_2| |\psi_1^*(\alpha)| \Big]^2 -\Big[ z_1 \psi_1^* - a_1 x_1 \psi_1^* - b x_1 |\psi_1^*|  \Big]^2 \bigg), \end{equation}
 where we have put $a_1 := a_{1,1}$, $a_2 := a_{1,2}$, $b :=\overline a_1$, $x_1 := x_1(t)$, $x_2 := x_2(t)$, $z_1 := z_1(t)$, $g := g(t)$, $\dot g := \dot g(t)$, $\psi_1^* := \psi_1^*(x, z)$, $\psi_1^*(\alpha) := \psi_1^*(x + \alpha g, z + \alpha \dot g)$ for brevity.
 
Consider only the summands 
$$\frac{z_1^2 (\psi_1^*(\alpha))^2 - z_1^2(\psi_1^*)^2}{\alpha} + \frac{a_1^2 x_1^2 (\psi_1^*(\alpha))^2 - a_1^2 x_1^2 (\psi_1^*)^2}{\alpha} +$$$$+ \frac{b^2 x_1^2 (\psi_1^*(\alpha))^2 - b^2 x_1^2 (\psi_1^*)^2}{\alpha} + \frac{-2 a_1 z_1 x_1 (\psi_1^*(\alpha))^2 + 2 a_1 z_1 x_1 (\psi_1^*)^2}{\alpha} +$$ $$+ \frac{-2 b z_1 |x_1| |\psi_1^*(\alpha)|\psi_1^*(\alpha)  + 2 b z_1 |x_1| |\psi_1^*| \psi_1^*}{\alpha} + \frac{2 a_1 b x_1 |\psi_1^*(\alpha)|\psi_1^*(\alpha) - 2 a_1 b x_1 |\psi_1^*|\psi_1^*}{\alpha}$$
of the right-hand side of equation (\ref{6.18'}). The first four summands in this expression are equal to zero for all $\alpha > 0$ as $|\psi_1^*(\alpha)| = |\psi_1^*| = 1$. The second two summands in this expression are equal to zero for all sufficiently small $\alpha > 0$ as the function $\psi_1^*(x, z)$ is continuous in $(x, z)$ and also as $|\psi_1^*(\alpha)| = |\psi_1^*| = 1$.

By direct calculation one can easily check that due to the continuity of the functions $x_1(t)$, $x_2(t)$ and due to the piecewise continuity of the functions $z_1(t)$ and $\psi_1^*(x(t), z(t))$, $t \in [t_1, t_2]$, other summands of the right-hand side of equation (\ref{6.18'})  are dominated by a piecewise continuous function for all sufficiently small $\alpha >0.$




Consider the functional $\displaystyle{\int_0^T  \min_{[w_1, w_2] \in \overline \partial h^2(x,z)} \big( \langle w_1(t), \alpha g(t) \rangle + \langle w_2(t), \alpha \dot g(t) \rangle \big) dt}$ in details. For each $\alpha > 0$ and for a. e. $t \in [0, T]$ we have the obvious inequality
$$
 \min_{[w_1, w_2] \in \overline \partial h^2(x,z)} \big( \langle w_1(t), \alpha g(t) \rangle + \langle w_2(t), \alpha \dot g(t) \rangle \big) \leqslant \langle w_1(t), \alpha g(t) \rangle + \langle w_2(t), \alpha \dot g(t) \rangle,
$$
where $[w_1(t), w_2(t)]$ is some measurable selector of the mapping $t \rightarrow \overline \partial h^2(x(t), \dot x(t))$ (due to the noted boundedness uniformly in $t \in [0, T]$ of the set $\overline \partial h^2(x, \dot x)$ we have $w_1, w_2 \in L_\infty^n [0,T]$), then taking into account the form of formula (\ref{13}) for each $\alpha > 0$ one also has the inequality 
$$
\int_0^T \min_{[w_1, w_2] \in \overline \partial h^2(x,z)} \big( \langle w_1(t), \alpha g(t) \rangle + \langle w_2(t), \alpha \dot g(t) \rangle \big) dt \leqslant $$ $$\leqslant \min_{w \in \overline \partial \varphi(x)} \int_0^T \langle w_1(t), \alpha g(t) \rangle + \langle w_2(t), \alpha \dot g(t) \rangle dt. 
$$
Insofar as for each $\alpha > 0$ and for a.e. $t \in [0, T]$ we have 
$$
\min_{[w_1, w_2] \in \overline \partial h^2(x,z)} \big( \langle w_1(t), \alpha g(t) \rangle + \langle w_2(t), \alpha \dot g(t) \rangle \big) \in$$
 $$\in \Big\{ \langle w_1(t), \alpha g(t) \rangle + \langle w_2(t), \alpha \dot g(t) \rangle \ \big| \ [w_1(t), w_2(t)] \in \overline \partial h^2(x(t), \dot x(t)) \Big\},
$$
and the set $\overline \partial h^2(x,z)$ is closed and bounded at each fixed $t$ by the superdifferential definition and the mapping $t \rightarrow \overline \partial h^2(x(t),z(t))$, as noted above, is upper semicontinuous and besides, the scalar product is continuous in its arguments and $g \in C_n[0,T]$, $\dot g \in P_n[0,T]$, then due to Filippov lemma \cite{Filippov2} there exists such measurable selector $[\overline{w}_1(t), \overline{w}_2(t)]$ of the mapping $t \rightarrow \overline \partial h^2(x(t),z(t))$ that for each $\alpha > 0$ and for a. e. $t \in [0, T]$ we have
$$
 \min_{[w_1, w_2] \in \overline \partial h^2(x,z)} \big( \langle w_1(t), \alpha g(t) \rangle + \langle w_2(t), \alpha \dot g(t) \rangle \big) = \langle w_1(t), \alpha g(t) \rangle + \langle w_2(t), \alpha \dot g(t) \rangle,
$$
so we have found the element $\overline{w}$ of the set $\overline \partial \varphi(x)$ which brings the equality in the previous inequality. Thus, one finally has
\begin{equation}
\label{14} 
\int_0^T \min_{[w_1, w_2] \in \overline \partial h^2(x,z)} \big( \langle w_1(t), \alpha g(t) \rangle + \langle w_2(t), \alpha \dot g(t) \rangle \big) dt = $$ $$= \min_{w \in \overline \partial \varphi(x)} \int_0^T \langle w_1(t), \alpha g(t) \rangle + \langle w_2(t), \alpha \dot g(t) \rangle dt. 
\end{equation}


From relations (\ref{16}), (\ref{14}) one obtains expression (\ref{10}). 

Prove statement 2). 

The convexity of the set $\overline \partial \varphi(x)$ follows directly from the convexity of the set $\overline \partial h^2(x,z)$ at each fixed $t \in [0, T]$.

Prove the boundedness of the set $\overline \partial h^2(x,z)$ uniformly in $t \in [0, T]$. Due to the upper semicontinuity of the mapping $t \rightarrow \overline \partial h^2(x(t),z(t))$, for each $t \in [0, T]$ there exists such number $\delta(t)$, that under the condition $|\overline{t} - t| < \delta(t)$ the inclusion $\overline \partial h^2(x(\overline t),z(\overline t)) \subset B_r(\overline \partial h^2(x(t),z(t)))$ holds true at $\overline{t} \in [0, T]$, where~$r$ is some fixed finite positive number. The intervals $D_{\delta(t)}(t)$, $t \in [0, T]$, form an open cover of the segment $[0, T]$, so by Heine-Borel lemma one can take a finite subcover from this cover. Hence, there exists such number $\delta > 0$ that for every $t \in [0, T]$ the inclusion $\overline \partial h^2(x(\overline t),z(\overline t)) \subset$ $\subset B_r(\overline \partial h^2(x(t),z(t)))$ holds true once $|\overline{t} - t| < \delta$ and $\overline{t} \in [0, T]$. This means that for the segment $[0, T]$ there exists a finite partition $t_1 = 0, t_2, \dots, t_{N-1}, t_N = T$ with the diameter $\delta$ such that $\overline \partial h^2(x,z) \subset \bigcup\limits_{i=1}^N B_r(\overline \partial h^2(x(t_i),z(t_i)))$ for all $t \in [0, T]$. It remains to notice that the set $\bigcup\limits_{i=1}^N B_r(\overline \partial h^2(x(t_i),z(t_i)))$ is bounded due to the compactness of the set $\overline \partial h^2(x,z)$ at each fixed $t \in [0, T]$. 

As shown in statement 1) and at the beginning of statement 2) proof, the set $\overline \partial \varphi(x)$ is convex and its elements $w$ belong to the space $L_\infty^n [0, T]$. Then all the more the set $\overline \partial \varphi(x)$ is a convex subset of the space $L_2^n [0, T]$. Let us prove that the set $\overline \partial \varphi(x)$ is closed in the weak topology of the space $L_2^n [0, T]$. Let $\{w_n\}_{n=1}^{\infty}$ be the sequence of vector-functions from the set $\overline \partial \varphi(x)$, converging to the vector-function $w^*$ in the strong topology of the space $L_2^n [0, T]$. It is known \cite{Munroe} that this sequence has the subsequence $\{w_{n_k}\}_{n_k=1}^{\infty}$ converging pointwise to~$w^{*}$ almost everywhere on $[0, T]$, i. e. there exists such subset $T' \subset [0, T]$ having the measure $T$ that for every point $t \in T'$ we have $w_{n_k}(t) \in \overline \partial h^2(x(t),z(t))$ and $w_{n_k}(t)$ converges to $w^{*}(t)$, $n_k = 1, 2, \dots$. But the set $\overline \partial h^2(x(t),z(t))$ is closed at each $t \in [0, T]$ by the definition of the subdifferential, hence for every $t \in T'$ we have $w^{*}(t) \in \overline \partial h^2(x(t),z(t))$. So the set $\overline \partial \varphi(x)$ is closed in the strong topology of the space $L_2^n [0, T]$, but it is also convex, so it is also closed in the weak topology of the space $L_2^n [0, T]$ \cite{DunfordSchwartz}. 

Recall that by virtue of Remark 1 it is sufficient to consider the space $L_2^{n}[0, T]$. The weak* compactness of the set $\overline \partial \varphi(x)$ in the space $L_2^{n}[0, T]$ follows from its weak compactness (in $L_2^{n}[0, T]$) by virtue of these topologies definitions (see \cite{KolmFom}). The space $L_2^{n}[0, T]$ is reflexive \cite{DunfordSchwartz}, so the set there is weakly compact if and only if it is bounded in norm and weakly closed \cite{DunfordSchwartz} in this space. These properties required have been proved in the previous two paragraphs. The theorem is proved. 
\end{proof}

 \begin{remark}
An interesting problem for future research is to make an attempt to construct a numerical method for minimizing the functional $I(x, c)$ based on the well-known in nonsmooth optimization optimality conditions, which are written out using the formula for the superdifferential of the functional $\varphi(x)$ obtained in Theorem 3 (and with the formulas for the functionals $\chi(x)$, $\omega(x, c)$ G${\rm\hat{a}}$teaux gradients). 
\end{remark}

\section{Another type of control for providing an object motion in the vicinity of the surface $s(x) = 0_m$}

As is seen from the previous sections of the paper, the main difficulty while operating with control (\ref{5}) is caused by nondifferentiability (and even discontinuity) of this function in the phase variables. In order to overcome this difficulty, let us try to change the control structure in such a way that on the one hand, it would retain the main property of this control (namely, would ensure that the system hit some (acceptable in practice) neighborhood of the surface $s(x) = {\bf 0_m}$ and stay there), and on the other hand, would provide the desired control with the required differential properties, giving possibility (as we will see below) to use the developed ``variational'' technique to solve the ``full'' problem of finding the trajectory of the system both on the surface $s(x) = {\bf 0_m}$ and before hitting it. Note that in this section we do not use the term ``discontinuity surface'' (but simply talk about the surface $s(x) = {\bf 0_m}$) because, as will be shown below, the right-hand side of the system with the types of control structure considered in this section preserves the property of continuity (and even continuous differentiability if $x_i \neq 0$, $i = \overline{1,n}$).


In this section we restrict ourselves to consideration of the surfaces $s(x) = {\bf 0_m}$ with the hyperplanes $$s(x) = Cx - b,$$
where the elements of the $m \times n$ matrix $C$ and the vector $b$ of the dimension $m$ are still to be determined (we will omit the dependence of the surface vector-function on these variables for convenience of notation). Regulation of the surface structure via these parameters will give more opportunities to achieve certain goals of the object motion, for example, hitting the required position at the given time moment. As noted in section 3, in many practical cases it is natural to assume that these parameters are approximately known on the basis of the physical meaning of a system. Instead of control (\ref{5}) consider two more types of control for $i = \overline{1,m}$:  \begin{equation}
\label{7.1}
u_i^{[1]}(x, C, b) = -\alpha_i |x| s_i(x) \exp \Big\{\mathrm{sign}(s_i(x)) (-s_i(x)) \Big\}, 
\end{equation}

 \begin{equation}
\label{7.2}
\left\{
\begin{array}{lll}
u_i^{[2]}(x, C, b) = \alpha_i |x| k \sqrt{-s_i(x)}, \ &\text{if} \ s_i(x) \leqslant -\delta,  \\
u_i^{[2]}(x, C, b) = -\alpha_i |x| (e s^3_i(x) + f s_i(x)) , \ &\text{if} \ -\delta \leqslant s_i(x) \leqslant \delta, \\
u_i^{[2]}(x, C, b) = -\alpha_i |x| k \sqrt{s_i(x)}, \ &\text{if} \ s_i(x) \geqslant \delta,  
\end{array}
\right.
\end{equation}
where $\alpha_i$ are still some positive numbers which are fixed in this section (see Remark~10 below), the given number $\delta > 0$ is sufficiently small, and the parameters $k$, $e$ and $f$ are chosen based on the requirement of continuous differentiability of the functions $u_i^{[2]}(x, C, b)$ (if $x_i \neq 0$, $i = \overline{1,n}$). It is not difficult to check that for every given in advance vicinity of the surface $s(x) =~{\bf 0_m}$ one can ensure the object hitting this vicinity in a finite time via the choice of the numbers $\alpha_i$, $i = \overline{1,m}$, and $\delta$. The disadvantage of such a choice of controls is as follows: despite the fact that they transfer the system to a small neighborhood of the chosen surface (that is, in accordance with the definition given in Introduction, they ensure the stability of the sliding mode), these controls do not endow the system with the stability property ``in big'', since the value of $ s(x(t))$ generally does not tend to zero as $t$ tends to infinity. Hence, ``correction'' of the surface parameters has an additional sense: to more strictly satisfy the conditions on the right endpoint (than the accuracy provided by introduced controls (\ref{7.1}), (\ref{7.2})). The advantage of such a choice is the fact that the right-hand side of the system is a continuously differentiable function of the phase coordinates if $x_i \neq 0$, $i = \overline{1,n}$ (and of the variables $C$, $b$). Note that although controls used in works \cite{Tang}, \cite{Shtessel} (mentioned in Introduction) are continuous, they are not continuously differentiable in the phase coordinates.

Let us also note the constructive features of the controls given. The advantage of the control $u^{[1]}$ is the relative simplicity (compared to the control~$u^{[2]}$) of defining this function. The disadvantage of the control $u^{[1]}$ is that the necessary (in order to ensure the stability of the sliding mode) values of the gains~$\alpha_i$, $i = \overline{1,m}$, are too large and are even sometimes unacceptable for practice in the case when the initial position of the object is rather far from the surface $s(x) = {\bf 0_m}$; and for any gains~$\alpha_i$, $i = \overline{1,m}$, taken in advance, there exists an initial object point from which control $u^{[1]}$ is unable to bring the system to the small vicinity of the surface $s(x) = {\bf 0_m}$ (the control $u^{[2]}$ control does not have this negative feature). 

As previously, the problem is of finding a trajectory $x^*$ from the space $C_n[0,T]$ (with the derivative $\dot x^*$ from the space $P_n[0,T]$) described by the system  
$$
\dot x_i = A_i x + u_i^{[1], [2]} (x, C, b), \quad i = \overline{1, m}, 
$$
$$
\dot x_i = A_i x, \quad i = \overline{m+1, n},
$$
with new control (\ref{7.1}) or (\ref{7.2}) (the parameters $C^* \in R^m \times R^n$ and $b^* \in R^m$ are to be determined as well), satisfying boundary conditions (\ref{2}), (\ref{3}). We assume that there exists such a solution.

Denote
$$h_i^{[1],[2]}(x, z, C, b) = z_i - A_i x - u_i^{[1], [2]}(x, C, b), \quad i = \overline{1,m}, $$ 
$$ h_i(x, z) = z_i - A_i x, \quad i = \overline{m+1,n},$$  
and construct the functional
$$\varphi^{[1],[2]}(z, C, b) =  \frac{1}{2} \sum_{i=1}^m \int_0^T h_i^{{2 \, [1],[2]}}(x(t), z(t), C, b) dt + \frac{1}{2} \sum_{i=m+1}^n \int_0^T h_i^2(x(t), z(t)) dt, $$
where instead of the phase variable $x(t)$ one should write its expression via its derivative $z(t)$ by formula  (\ref{3.11''}). 

Then as previously construct the functional  
\begin{equation}
\label{7.4}
I^{[1],[2]}(z, C, b) = \varphi^{[1],[2]}(z, C, b) + \chi(z) ,
\end{equation}
in which the function $\chi(z)$ is given by formula (\ref{3.14}).

Consider the problem of minimizing functional (\ref{7.4}) on the space \linebreak $P_n [0, T] \times R^{m} \times R^n \times R^m$. Denote $(z^*, C^*, b^*)$ a global minimizer of this functional. Then $$ x^*(t) = x_0 + \int_0^t z^*(\tau) d\tau $$
is a solution of the original problem (and the matrix $C^*$ and the vector $b^*$ define the considered surface structure). 

It is obvious that the point $x^*$ is a solution of Problem 2 iff the functional $I^{[1],[2]}(z, C, b)$ vanishes at the corresponding point, i.~e. $I^{[1],[2]}(z^*,C^*, b^*) = 0$. In order to obtain a more constructive minimum condition, which is useful for developing a numerical method for solving the original problem, let us study the differential properties of the functional $I^{[1],[2]}(z, C, b)$. Suppose that the trajectories $x_i(t)$, $i = \overline{1,n}$, vanish only at isolated time moments of the segment~$[0, T]$. This assumption is natural if before the transition to the sliding mode there are no trajectories which are in zero position and have zero ``speed'' on some subspace of the interval $[0, T]$ of nonzero measure and if the discontinuity surfaces do not contain any of these trajectories vanishing on the interval $[0, T]$ subset of nonzero measure. With the help of the classical variation, Lagrange's mean value theorem and integration by parts it is easy to check the G${\rm\hat{a}}$teaux differentiability of the given functional. 

\begin{theorem}
\label{th:4}
Let the trajectories $x_i(t)$, $i = \overline{1,n}$, vanish only at isolated time moments of the interval $[0, T]$. Then the functional $I^{[1], [2]}(z, C, b)$ is G${\rm\hat{a}}$teaux differentiable and its gradient at the point $(z, C, b)$ is expressed by the formula
$$\nabla I^{[1],[2]}(z, C, b) =  \sum_{i=1}^m \Bigg[ h_i^{[1],[2]}(x(t), z(t), C, b) {\bf e_i} - $$ $$-\int_t^T h_i^{[1],[2]}(x(\tau), z(\tau), C, b) \left(A'_i+\frac{\partial u_i^{[1],[2]}(x(\tau), C, b) }{\partial x} \right) d \tau,$$
$$-\int_0^T h_i^{[1],[2]}(x(t), z(t), C, b) \frac{\partial u_i^{[1],[2]}(x(t), C, b) }{\partial C} dt,  -$$$$-\int_0^T h_i^{[1],[2]}(x(t), z(t), C, b) \frac{\partial u_i^{[1],[2]}(x(t), C, b) }{\partial b} dt \Bigg] + $$
$$+  \Bigg[ \sum_{i=m+1}^n h_i(x(t), z(t)) {\bf e_i} - \int_t^T h_i(x(\tau), z(\tau)) A'_i \, d\tau, \, {\bf 0_{m \times n}}, \, {\bf 0_m}  \Bigg] +$$
$$+
\Bigg[ \sum_{j \in J} \left( {x_0}_j + \int_0^T z_j(t) dt - {x_T}_j \right) {\bf e_j}, \, {\bf 0_{m \times n}}, \, {\bf 0_m}  \Bigg] 
.$$
\end{theorem}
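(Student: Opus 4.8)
The plan is to compute the G${\rm\hat{a}}$teaux derivative of $I^{[1],[2]}$ directly from the definition, treating $\varphi^{[1],[2]}(z,C,b)$ and $\chi(z)$ separately. The contribution of $\chi$ is elementary and is obtained exactly as in Sections 3--4: its classical variation yields the block $\sum_{j\in J}\big({x_0}_j + \int_0^T z_j(t)\,dt - {x_T}_j\big){\bf e_j}$ in the $z$-variables and $\bf 0$ in the $C$- and $b$-variables, so the substantive work concentrates on $\varphi^{[1],[2]}$. Fix $(z,C,b)$ and an arbitrary direction $(g,\Gamma,\beta)\in P_n[0,T]\times R^m\times R^n\times R^m$, and recall that perturbing $z$ by $\alpha g$ perturbs $x(t)=x_0+\int_0^t z(\tau)\,d\tau$ by $\alpha\int_0^t g(\tau)\,d\tau$. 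I would form the difference quotient of $\varphi^{[1],[2]}$ and, for each fixed $t$, expand its integrand. Under the standing hypothesis the trajectories $x_i(t)$ vanish only at isolated instants, hence outside a finite set the functions $u_i^{[1],[2]}(\cdot,C,b)$, and therefore $h_i^{[1],[2]}$ and $\big(h_i^{[1],[2]}\big)^2$, are continuously differentiable in $(x,C,b)$ at the relevant point; applying Lagrange's mean value theorem to $\big(h_i^{[1],[2]}\big)^2$ along the segment joining the unperturbed and perturbed arguments rewrites the integrand of the difference quotient as the scalar product of the gradient of $\big(h_i^{[1],[2]}\big)^2$ at an intermediate point with the increment $\big(\int_0^t g\,d\tau,\ g(t),\ \Gamma,\ \beta\big)$.

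The next step is to pass to the limit $\alpha\downarrow 0$ under the integral sign by Lebesgue's dominated convergence theorem, which requires an $\alpha$-uniform integrable majorant of the integrand of the difference quotient. Such a majorant exists because $x(\cdot)$ is continuous, $z(\cdot)$ is piecewise continuous and bounded, $g$, $\Gamma$, $\beta$ are fixed, the partial derivatives of $u_i^{[1],[2]}$ are continuous on the compact sets swept out for small $\alpha$ away from the isolated zeros of the $x_i$, and the intermediate points produced by the mean value theorem remain in a fixed bounded neighbourhood; hence the integrand is dominated by a piecewise continuous, in particular integrable, function on $[0,T]$. In the limit, using $\nabla \big(h_i^{[1],[2]}\big)^2 = 2h_i^{[1],[2]}\nabla h_i^{[1],[2]}$ together with $\partial h_i^{[1],[2]}/\partial x = -A_i' - \partial u_i^{[1],[2]}/\partial x$, the linear dependence of $h_i^{[1],[2]}$ on $z_i$ (giving the direct $z$-gradient ${\bf e_i}$), $\partial h_i^{[1],[2]}/\partial C = -\partial u_i^{[1],[2]}/\partial C$, $\partial h_i^{[1],[2]}/\partial b = -\partial u_i^{[1],[2]}/\partial b$, and the trivial gradients of $h_i = z_i - A_i x$ for $i=\overline{m+1,n}$, the directional derivative of $\varphi^{[1],[2]}$ emerges as a sum of integrals over $[0,T]$ containing the local terms $h_i^{[1],[2]}(x(t),z(t),C,b)g_i(t)$, the terms $h_i^{[1],[2]}(x(t),z(t),C,b)\big\langle -A_i' - \partial u_i^{[1],[2]}/\partial x,\ \int_0^t g\,d\tau\big\rangle$, and the analogous $C$- and $b$-terms paired with $\Gamma$ and $\beta$.

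Finally I would rewrite this as $\big\langle \nabla I^{[1],[2]}(z,C,b),\,(g,\Gamma,\beta)\big\rangle$ in $L_2^n[0,T]\times R^m\times R^n\times R^m$, the only nontrivial move being to transfer the factor $\int_0^t g(\tau)\,d\tau$ off $g$: integration by parts, equivalently Fubini's theorem, gives $\int_0^T \big\langle p(t),\, \int_0^t g(\tau)\,d\tau\big\rangle dt = \int_0^T \big\langle \int_t^T p(\tau)\,d\tau,\ g(t)\big\rangle dt$, which converts the $\partial/\partial x$-contributions of the $h_i^{[1],[2]}$ (and of the $h_i$ for $i>m$, and of $\chi$) into the $\int_t^T(\cdots)\,d\tau$ terms in the statement, while the $z_i$-contribution stays local in $t$ and the $C$- and $b$-contributions become the full-interval integrals $-\int_0^T h_i^{[1],[2]}\,\partial u_i^{[1],[2]}/\partial C\,dt$ and $-\int_0^T h_i^{[1],[2]}\,\partial u_i^{[1],[2]}/\partial b\,dt$. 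Collecting the $i=\overline{1,m}$ terms, the $i=\overline{m+1,n}$ terms, and the $\chi$-block yields the asserted formula; since the resulting functional of $(g,\Gamma,\beta)$ is linear and bounded, the G${\rm\hat{a}}$teaux derivative exists and the bracketed expression is the gradient. I expect the main obstacle to be the careful control of the majorant near the isolated instants where some $x_i(t)=0$ and the factor $|x|$ in $u_i^{[1],[2]}$ makes the right-hand side only directionally (not continuously) differentiable: one must verify that the bad behaviour there is confined to a null set and remains integrable, which is precisely the role of the hypothesis on the zeros of the $x_i$.
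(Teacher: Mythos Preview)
Your proposal is correct and follows essentially the same route the paper indicates: the paper's argument for this theorem is only sketched (``with the help of the classical variation, Lagrange's mean value theorem and integration by parts it is easy to check the G${\rm\hat{a}}$teaux differentiability''), together with the reference to the analogous detailed proof of Theorem~\ref{th:1}, which likewise uses Lebesgue's dominated convergence to pass to the limit under the integral. Your explicit treatment of the majorant near the isolated zeros of the $x_i$ and of the integration-by-parts/Fubini step makes precise exactly what the paper leaves implicit.
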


Let us write down the known minimum condition for a Gateaux differentiable functional. 

\begin{theorem}
\label{th:5}
{Let the trajectories $x_i(t)$, $i = \overline{1,n}$, vanish only at the isolated time moments of the interval $[0, T]$. In order for the point $(z^*,C^*, b^*)$ to minimize the functional $I^{[1],[2]}(z, C, b)$, it is necessary that
$$
0_n \times {\bf 0_n} \times {\bf 0_m} \times {\bf 0_m}  =  \sum_{i=1}^m \Bigg[ h_i^{[1],[2]}(x^*(t), z^*(t), C^*, b^*) {\bf e_i} - $$ $$-\int_t^T h_i^{[1],[2]}(x^*(\tau), z^*(\tau), C^*, b^*) \left(A'_i+\frac{\partial u_i^{[1],[2]}(x^*(\tau), C^*, b^*) }{\partial x} \right) d \tau,  
$$
$$-\int_0^T h_i^{[1],[2]}(x^*(t), z^*(t), C^*, b^*) \frac{\partial u_i^{[1],[2]}(x^*(t), C^*, b^*) }{\partial C} dt, -$$ $$-\int_0^T h_i^{[1],[2]}(x^*(t), z^*(t), C^*, b^*) \frac{\partial u_i^{[1],[2]}(x^*(t), C^*, b^*) }{\partial b} dt \Bigg] + $$
$$ +  \Bigg[ \sum_{i=m+1}^n h_i(x^*(t), z^*(t)) {\bf e_i} - \int_t^T h_i(x^*(\tau), z^*(\tau)) A'_i \, d\tau, \, {\bf 0_{m \times n}}, \, {\bf 0_m}  \Bigg]+$$
$$+
\Bigg[ \sum_{j \in J} \left( {x_0}_j + \int_0^T z^*_j(t) dt - {x_T}_j \right) {\bf e_j}, \, {\bf 0_{m \times n}}, \, {\bf 0_m}  \Bigg]. $$
where $0_n$ is a zero element of the space $P_n[0, T]$.
}
\end{theorem}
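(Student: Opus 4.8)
This is the classical first-order necessary condition for the unconstrained minimization of a G${\rm\hat{a}}$teaux differentiable functional on a linear space, so the argument is short; I outline the plan. The domain $P_n[0,T] \times R^m \times R^n \times R^m$ of $I^{[1],[2]}$ is a vector space, hence for any direction $(g,\Gamma,\beta)$ in it and any $\lambda \in R$ the shifted point $(z^*+\lambda g,\, C^*+\lambda\Gamma,\, b^*+\lambda\beta)$ is again admissible; since $(z^*,C^*,b^*)$ is a minimizer, the scalar function $\lambda \mapsto I^{[1],[2]}(z^*+\lambda g, C^*+\lambda\Gamma, b^*+\lambda\beta)$ attains its minimum at $\lambda=0$.

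First I would observe that the right-hand derivative of this scalar function at $\lambda=0$ is therefore nonnegative. By Theorem~\ref{th:4} --- whose hypothesis, that the trajectories vanish only at isolated moments, is exactly the one assumed here --- the functional is G${\rm\hat{a}}$teaux differentiable at $(z^*,C^*,b^*)$, so that right-hand derivative equals the value of the representing element $\nabla I^{[1],[2]}(z^*,C^*,b^*)$ (identified, via Remark~1, with an element of $L^2_n[0,T] \times R^m \times R^n \times R^m$) on the direction $(g,\Gamma,\beta)$. Replacing $(g,\Gamma,\beta)$ by $-(g,\Gamma,\beta)$ gives the reverse inequality, so this value is $0$ for every admissible direction.

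It then remains to deduce that the gradient itself is the zero element of the product space. Here the pairing is the $L^2_n[0,T]$ inner product on the first factor together with the Euclidean inner products on the finite-dimensional factors, all of which are nondegenerate; moreover $P_n[0,T]$ is dense in $L^2_n[0,T]$ and the first component of the pairing depends $L^2$-continuously on $g$, so vanishing on all $g\in P_n[0,T]$ already forces the $L^2_n[0,T]$-component of the gradient to vanish almost everywhere. Hence $\nabla I^{[1],[2]}(z^*,C^*,b^*)$ equals the zero element of $L^2_n[0,T] \times R^m \times R^n \times R^m$, and substituting the explicit formula of Theorem~\ref{th:4} produces the stated equality. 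No step here is a genuine obstacle; the only point deserving a line of care is this last passage from ``the derivative vanishes in every direction'' to ``the gradient is zero'', which rests on the density and nondegeneracy just noted together with the dual-space identification of Remark~1.
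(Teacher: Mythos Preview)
Your proposal is correct and aligns with the paper's approach: the paper gives no proof at all beyond the sentence ``Let us write down the known minimum condition for a Gateaux differentiable functional,'' treating Theorem~\ref{th:5} as an immediate consequence of Theorem~\ref{th:4} together with the classical first-order necessary condition. You have simply supplied the standard argument that the paper leaves implicit, including the density/nondegeneracy step needed to pass from vanishing directional derivatives on $P_n[0,T]$ to the gradient being zero in $L^2_n[0,T]$.
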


It is apparent, that $I(z^*, C^*, b^*) = 0$ is necessary and sufficient minimum condition for this functional and in this case the equality of Theorem 5 is automatically satisfied.

{\bf Example 2.}
Consider the following system
$$\dot x_1(t) = x_1(t) - 3 x_2(t) + x_3(t) - 50 |x(t)| (c_{11} x_1(t) - b_1) \exp \Big\{ (-|c_{11} x_1(t) - b_1|) \Big\},$$
$$\dot x_2(t) = 5 x_1(t) + x_2(t) - x_3(t) - 50 |x(t)| (c_{22} x_2(t) - b_2) \exp \Big\{(-|c_{22} x_2(t) - b_2|) \Big\},$$ 
$$\dot x_3(t) = 5 x_1(t) - x_2(t) + x_3(t)$$
with the boundary conditions
$$x_1(0) = 2, \ x_2(0) = -2, \ x_3(0) = 2, $$
$$x_1(0.2) = 0.55, \ x_2(0.2) = 2.5, \ x_3(0.2) = 2.95.$$
As we see, the surfaces in this example are of the form
$$s_1(x) = c_{11} x_1 - b_1 = 0, \quad  s_2(x) = c_{22} x_2 - b_2 = 0.$$

A simplest steepest descent method (in the functional space) \cite{Kantorovich} with some modifications (see Remark 7, b) above) was used in order to minimize the functional $I^{[1]}(z, c, b)$ in this problem. We use the notation for the parameters $c = (c_{11}, c_{22})'$ here instead of the matrix $C$ for simplicity. 

The point $(z_{\{0\}}, c_{\{0\}}, b_{\{0\}}) = (0, 0, 0, 0.18, 0.2, 0.12, 0.51)'$ was taken as the initial one (note that here $(0, 0, 0)'$ is a zero point in the functional space $P_3 [0,1]$ and the points $(0.18, 0.2)'$ and $(0.12, 0.51)'$ belong to the spaces $R^2$ and $R^2$ respectively). Herewith, we have $I(z_{\{0\}}, c_{\{0\}}, b_{\{0\}}) \approx 1591.75905$. Substitute the parameters $c_{\{0\}}, b_{\{0\}}$ into the system given. Integrate the closed-loop system numerically (for this example the Rosenbrock stiff 3-4-th order method was used) and obtain $x_1(0.2) \approx 0.5879$, $x_2(0.2) \approx 2.59282$, $x_3(0.2) \approx 2.97868$; we see that the desired value on the right endpoint is achieved with an error of the order $10^{-1}$.  

At the 79-th iteration the point $(z_{\{79\}}, c_{\{79\}}, b_{\{79\}})' $ was constructed and we we approximately put  $(z^*, c^*, b^*) = (z_{\{79\}}, c_{\{79\}}, b_{\{79\}})$. Herewith \linebreak $c^* \approx (0.1836729, 0.2016907)'$, $b^* \approx (0.1139969, 0.4974675)'$,  $I(z^*, c^*, b^*) \approx 0.00064$ and $|| \nabla I(z^*, c^*, b^*) ||_{L^2_3 [0,1] \times R^2 \times R^2} \approx 0.0308$. The point $z^*$ is not given here for two reasons: 1) for brevity (as it is constructed as a rather bulky piecewise continuous vector-function) and 2) because only the parameters $c^*$ and $b^*$ are finally used to estimate the result obtained (see the next paragraph).  

Having obtained the parameters $c^*$, $b^*$, substitute them into the system given. Integrate the closed-loop system with the Rosenbrock stiff \linebreak 3-4-th order method and finally have $x_1(0.2) \approx 0.54513$, $x_2(0.2) \approx 2.50537$, \linebreak $x_3(0.2) \approx 2.95221$, so we see that the desired value on the right endpoint is achieved with an error of the order $5 \times 10^{-3}$. So these values have been improved via ``correcting'' the parameters of the surface considered. Picture~2 demonstrates the results of calculations. The black lines denote the curves which were obtained via the method of the paper, while the dashed red lines denote the curves obtained via integration of the closed-loop system. It is easy to see that $|s_1(x^*(t))|, |s_2(x^*(t)|  < \varepsilon_s$ beginning from some time moment, where the value $\varepsilon_s$ is of the order $10^{-1}$, so the system considered hits \linebreak $\varepsilon_s$-vicinity of the desired surface at some finite time moment and remains there after this moment. For decreasing this vicinity, one has to increase the gains~$\alpha_i$, $i = \overline{1,m}$ (see also Remark 10 below).

 \begin{figure*}[h!]
\begin{minipage}[h]{0.3\linewidth}
\center{\includegraphics[width=1\linewidth]{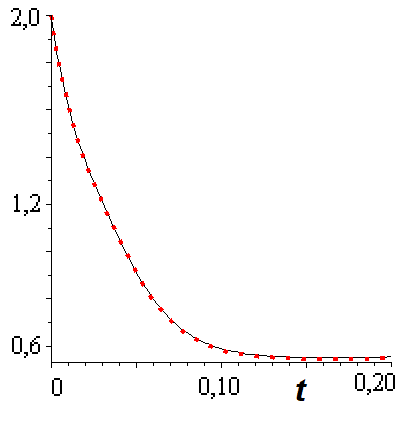} }
\end{minipage}
\hfill
\begin{minipage}[h]{0.3\linewidth}
\center{\includegraphics[width=1\linewidth]{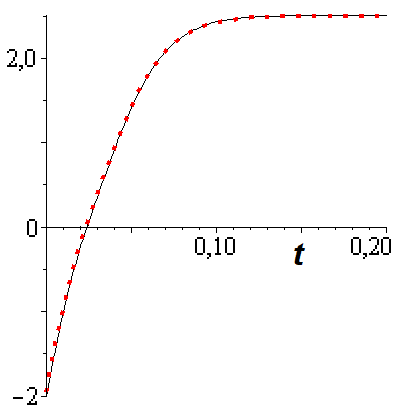} }
\end{minipage}
\hfill
\begin{minipage}[h]{0.3\linewidth}
\center{\includegraphics[width=1\linewidth]{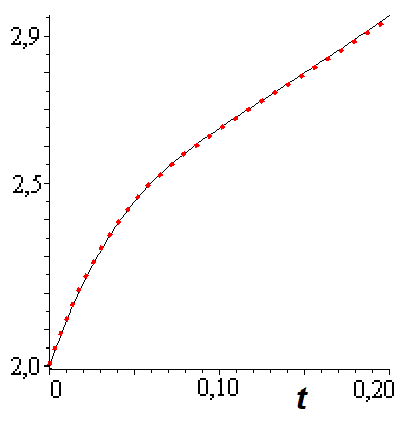} }
\end{minipage}
\caption{Example 2. The trajectories $x_1^*(t)$, $x_2^*(t)$, $x_3^*(t)$.}
\end{figure*}

\begin{remark}
In addition to regulating the structure of the vector-function $s(x)$ (i.e. the parameters $C$ and $b$), one may consider the gains $\alpha_i$, $i = \overline{1,m}$, ``free'' as well and optimize them based on the desired properties of the system according to a similar scheme. 
\end{remark}


\begin{remark}
As the paper presented is at most of a theoretical nature, \linebreak Examples~1, 2 are rather simple and aim at only illustrating the approach developed, so we don't describe the computational aspects in detail. It is planned to consider more complicated examples and to study some aspects of the method, as well as to use its modifications (taking into account the specifics of the functionals under consideration) or some other known optimization methods in functional space in order to improve computational efficiency, in future research.  
\end{remark}

\end{document}